\newtheorem{thm}{Theorem}[section]
\newtheorem{lem}[thm]{Lemma}
\newtheorem{prop}{Proposition}
\newtheorem{rem}{Remark}
\newcommand{\al}{\alpha}
\newcommand{\ep}{\varepsilon}
\def\wt{\widetilde}
\def\re{\mathbb{R}}
\def\|{\Vert}
\def\({\left(}
\def\){\right)}
\def\e{\varepsilon}
\def\l{\lambda}
\def\e{\varepsilon}
\def\cj{\overline}
\begin{document}


\title[Hardy-Sobolev critical equations with the multiple singularities]{On the Neumann Problem of Hardy-Sobolev critical equations with the multiple singularities}

\author{Masato Hashizume}

\address{Department of Mathematics, Graduate School of Science, Osaka City University\\
		3-3-138 Sugimoto Sumiyoshi-ku, Osaka-shi\\
		Osaka 558-8585 Japan}
\email{d15san0f06@st.osaka-cu.ac.jp}
		
\author{Chun-Hsiung Hsia}
\address{Department of Mathematics, Institute of Applied Mathematical Sciences\\
		National Center for Theoretical Sciences, National Taiwan University,\\
		No. 1, Sec. 4, Roosevelt Rd, Taipei 10617, Taiwan}
\email{willhsia@math.ntu.edu.tw}

\author{Gyeongha Hwang}
\address{National Center for Theoretical Sciences\\
		No. 1 Sec. 4 Roosevelt Rd., National Taiwan University\\
		Taipei, 10617, Taiwan}
\email{ghhwang@ncts.ntu.edu.tw}
 
\begin{abstract}
Let $N \geq 3$ and $\Omega \subset \mathbb{R}^N$ be $C^2$ bounded domain. We study the existence of positive solution $u \in H^1(\Omega)$ of 
\begin{align*}
\left\{
\begin{array}{l}
-\Delta u + \lambda u = \frac{|u|^{2^*(s)-2}u}{|x-x_1|^s} + \frac{|u|^{2^*(s)-2}u}{|x-x_2|^s}\text{ in }\Omega\\
\frac{\partial u}{\partial \nu} = 0 \text{ on }\partial\Omega,
\end{array}\right.
\end{align*}
where $0 < s <2$, $2^*(s) = \frac{2(N-s)}{N-2}$ and $x_1, x_2 \in \overline{\Omega}$ with $x_1 \neq x_2$. First, we show the existence of positive solutions to the equation provided the positive $\lambda$ is small enough. In case that one of the singularities locates on the boundary and the mean curvature of the boundary at this singularity is positive, the existence of positive solutions is always obtained for any $\lambda > 0$. Furthermore, we extend the existence theory of solutions to the equations for the case of the multiple singularities with different exponents.
\end{abstract}




\maketitle

\section{Introduction}
The Hardy-Sobolev inequality asserts that for all $u \in H^1_0(\mathbb{R}^N)$, there exists a positive constant $C=C(N,s)$ such that
\begin{align}\label{main ineq}
C\left(\int_{\mathbb{R}^N}\frac{|u|^{2^*(s)}}{|x|^s}dx\right)^{\frac{2}{2^*(s)}} \leq \int_{\mathbb{R}^N}|\nabla u|^2 dx
\end{align}
where $N \geq 3$, $0 < s <2$ and $2^*(s) = \frac{2(N-s)}{N-2}$. Suppose $\Omega \subset \mathbb{R}^N$, then the Hardy-Sobolev inequality holds for $u \in H^1_0(\Omega)$. The best constant of the Hardy-Sobolev inequality is defined as
\[S_s(\Omega) := \inf_{u \in H^1_0(\Omega) \setminus \{0\}} \frac{\int_{\Omega} |\nabla u|^2 dx}{\left(\int_{\Omega} \frac{|u|^{2^*(s)}}{|x|^s} dx\right)^{\frac{2}{2^*(s)}}}.\]
It is easy to see, up to a scaling, that the minimizer for $S_s(\Omega)$ is a least-energy solution of the Euler-Lagrangian equation:
\begin{align}\label{EL eq}
\left\{
\begin{array}{l l}
-\Delta u = \frac{|u|^{2^*(s)-2}u}{|x|^s},\ u>0 &\text{ in } \Omega\\
u = 0 &\text{ on } \partial \Omega.
\end{array}
\right.
\end{align}

When $\Omega = \mathbb{R}^N$, $S_s(\mathbb{R}^N)$ is attained by
\[g_a(x) = \left(a(N-s)(N-2)\right)^{\frac{N-2}{2(N-s)}}(a+|x|)^{\frac{2-N}{2-s}},\]
for some $a>0$ (see \cite{ghyu, lieb}). Moreover, $g_a(x)$ are the only positive solutions to \eqref{EL eq}. Hence, in case $0 \in \Omega$, by a standard scaling invariance argument, it is easy to see $S_s(\Omega) = S_s(\mathbb{R}^N)$ and $S_s(\Omega)$ cannot be attained unless $\Omega = \mathbb{R}^N$. However, if $0 \in \partial \Omega$, the existence of the minimizer for $S_s(\Omega)$ is established under the assumption that the mean curvature of $\partial \Omega$ at $0$, $H(0)$ is negative (see \cite{ghro}). 

Concerning the Dirichlet problem, the second author and his collabolators \cite{hlw} showed the existence of solutions to the equation 
\[-\Delta u = \lambda u^{\frac{N+2}{N-2}} + \frac{|u|^{2^*(s)-2}u}{|x|^s}, u>0\text{ in } \Omega\]
for $\lambda > 0$. Furthermore, Li-Lin \cite{lilin} proved the existence of the least energy solution to the equation involving two Hardy-Sobolev critical exponents
\[-\Delta u = \lambda\frac{u^{2^*(s_1)-1}}{|x|^{s_1}} + \frac{u^{2^*(s_2)-1}}{|x|^{s_2}},\ u > 0 \text{ in }\Omega\]
where $0 < s_2 < s_1 < 2$ and $0 \neq \lambda \in \mathbb{R}$. For intersted readers, see also \cite{bpz, czz, mus}.

Regarding the Neumann problem 
\begin{align}\label{NP eq}
\left\{
\begin{array}{l}
-\Delta u + \lambda u = \frac{|u|^{2^*(s)-2}u}{|x|^s}, u > 0\text{ in }\Omega,\\
\frac{\partial u}{\partial \nu} = 0 \text{ on }\partial\Omega,
\end{array}\right.
\end{align}
we first notice that if $\lambda \leq 0$, then integration of \eqref{NP eq} over $\Omega$ gives
\[ 0 < \int_\Omega  \frac{|u|^{2^*(s)-2}u}{|x|^s} dx = \int_{\Omega} -\Delta u + \lambda u dx \leq 0.\]
Hence, there does not exist a positive solution to \eqref{NP eq}. So, only the case where $\lambda > 0$ are adderessed in literature. In this case, Ghossoub-Kang \cite{ghka} showed that \eqref{NP eq} has a positive solution if the mean curvature of $\partial \Omega$ at $0$, $H(0) $ is positive. Furthermore, Chabrowski \cite{cha} investigated the solvability of the nonlinear Neumann problem with indefinite weight functions
\[-\Delta u + \lambda u = \frac{Q(x)|u|^{2^*(s)-2}u}{|x|^s}, u > 0\text{ in }\Omega\]
and gives some sufficient condition on $Q(x)$ provided the mean curvature of $\partial \Omega$ at $0$, $H(0) >0$. 
Recently, concerning the equation (\ref{NP eq}) the first author investigated the case when $H(0) \leq 0$ in \cite{Hashizume}. He showed the existence of $\l_*$ such that for $\l \in (0,\l_*)$, a least energy solution of (\ref{NP eq}) exists, and when $\l > \l_*$ a least energy solution does not exist.
We remark that the sufficient conditions for Dirichlet and Neumann problems are completely different.

In this paper, we consider the Neumann problem with the multiple singularities
\begin{align}\label{main in}
\left\{
\begin{array}{l}
-\Delta u + \lambda u = \frac{|u|^{2^*(s)-2}u}{|x-x_1|^s} + \frac{|u|^{2^*(s)-2}u}{|x-x_2|^s}\text{ in }\Omega\\
\frac{\partial u}{\partial \nu} = 0 \text{ on }\partial\Omega
\end{array}\right.
\end{align}
where $\Omega$ is $C^2$-bounded domain and $x_1, x_2 \in \overline{\Omega}$ with $x_1 \neq x_2$.

The main results of this article are as follows
\begin{thm}[Existence of solution to \eqref{main in} for small $\lambda$]\label{thm1}
There exists $\Lambda > 0$ such that the equation \eqref{main in} has a positive solution provided the positive parameter $\lambda < \Lambda$.
\end{thm}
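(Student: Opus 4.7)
The plan is to obtain the desired solution as a minimizer of the weighted Rayleigh quotient
\[
S_\l := \inf_{u \in H^1(\Omega)\setminus\{0\}} \frac{\intO(|\nabla u|^2 + \l u^2)\,dx}{\left(\intO\left(\frac{|u|^{2^*(s)}}{|x-x_1|^s} + \frac{|u|^{2^*(s)}}{|x-x_2|^s}\right)dx\right)^{2/2^*(s)}}.
\]
If $S_\l$ is attained by some $u \not\equiv 0$, one may replace $u$ by $|u|$ and rescale by a positive constant to produce a nonnegative weak solution of \eqref{main in}; the strong maximum principle then upgrades it to a strictly positive solution. So the problem reduces to attainment of $S_\l$.

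Since the embedding of $H^1(\Omega)$ into $L^{2^*(s)}(\Omega,|x-x_i|^{-s}dx)$ is not compact, the sole obstruction is loss of mass of a minimizing sequence through concentration at $x_1$ or $x_2$. A Brezis--Lieb/Struwe-type concentration-compactness analysis adapted to the Hardy--Sobolev setting, in the spirit of \cite{ghka, ghro, Hashizume}, shows that a minimizing sequence is relatively compact as soon as
\[
S_\l < \mu^\star := \min\{\mu(x_1), \mu(x_2)\},\qquad \mu(x_i) := \begin{cases} S_s(\re^N), & x_i \in \Omega,\\[2pt] 2^{-(2-s)/(N-s)} S_s(\re^N), & x_i \in \pd\Omega, \end{cases}
\]
the numbers $\mu(x_i)$ being the local Hardy--Sobolev constants at $x_i$ (obtained in the boundary case by Schwarz-symmetric reflection across the tangent hyperplane).

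To verify $S_\l < \mu^\star$, assume without loss of generality that $\mu(x_1) \leq \mu(x_2)$, and concentrate a test function $u_\e$ at $x_1$: take a smooth cut-off of a rescaling of an extremal for $\mu(x_1)$, namely the bubble $g_a$ on $\re^N$ when $x_1 \in \Omega$, or the half-space analogue in boundary normal coordinates when $x_1 \in \pd\Omega$. Standard expansions then give $\intO|\nabla u_\e|^2 = \mu(x_1)\,A^{2/2^*(s)} + R(\e)$ where $A$ is the $L^{2^*(s)}$ Hardy--Sobolev mass of the extremal and $R(\e)$ is an $O(\e)$ geometric correction (present only if $x_1 \in \pd\Omega$), $\l\intO u_\e^2 = O(\l\e^2)$ for $N\geq 5$ (with logarithmic order for $N=4$ and $O(\l\e)$ for $N=3$ after truncation), the $x_1$-Hardy--Sobolev integral equals $A + o(1)$, and the $x_2$-Hardy--Sobolev integral is strictly positive of order $\e^s$ since $|x_1 - x_2|>0$. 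Expanding the Rayleigh quotient, the positive $\e^s$ term in the denominator produces a gain which, because $s<2$, dominates the $\l\e^2$ loss and, for $\l$ smaller than some threshold $\Lambda >0$, also dominates the unfavourable boundary correction $R(\e)$. Optimizing $\e$ in this regime yields $S_\l < \mu^\star$ and hence a minimizer.

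The principal technical difficulty lies in this test-function estimate in the worst scenario, where both $x_1$ and $x_2$ lie on $\pd\Omega$ and neither mean curvature is positive: the half-space bubble by itself introduces a boundary correction of order $\e$ to the numerator that, on its own, would defeat the $O(\e^s)$ gain coming from the second singularity. One must therefore genuinely exploit both the positive $x_2$-contribution to the denominator and the smallness of $\l$ to extract the strict inequality $S_\l < \mu^\star$; this is where the hypothesis $\l < \Lambda$ becomes essential, in contrast with the case in which at least one singularity is interior (where $s<2$ alone suffices and no restriction on $\l$ would be needed).
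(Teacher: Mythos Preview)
Your overall framework (the equivalent Rayleigh-quotient/mountain-pass setup, and the compactness threshold $\mu^\star$ you state) matches the paper's Proposition~\ref{ps}. But the test-function step diverges from the paper and, as written, contains a genuine gap.

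The paper does \emph{not} use bubbles at all for Theorem~\ref{thm1}. It simply tests with constant functions: for $c>0$,
\[
J_\lambda(c)=\tfrac12|\Omega|\lambda c^2-\tfrac{1}{2^*(s)}C_1 c^{2^*(s)},\qquad C_1=\intO\Big(\frac{1}{|x-x_1|^s}+\frac{1}{|x-x_2|^s}\Big)dx,
\]
so $\max_{c>0}J_\lambda(c)$ is an explicit power of $\lambda$ and drops below the threshold as soon as $\lambda$ is small. Equivalently, in your formulation $S_\lambda\le \lambda|\Omega|/C_1^{2/2^*(s)}\to 0$. No geometry, no bubble expansions, no case distinctions on the location of the singularities.

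Your bubble argument, by contrast, does not close in the worst case you yourself single out. When $x_1\in\pd\Omega$ with $H(x_1)\le 0$, the boundary correction $R(\e)$ in the numerator has the \emph{wrong} sign and is $\lambda$-independent. In the paper's normalization this correction is of order $\e^{1/(2-s)}$, while the gain from the $x_2$-term in the denominator is of order $\e^{s/(2-s)}$; hence for $s\ge 1$ the correction is at least as large as the gain. Taking $\lambda$ small only controls the $\lambda\|u_\e\|_2^2$ term, which is a third, separate contribution; it cannot offset $R(\e)$. So the sentence ``One must therefore genuinely exploit both the positive $x_2$-contribution and the smallness of $\lambda$'' does not actually produce $S_\lambda<\mu^\star$ for this range of $s$, and your proof is incomplete precisely where you flag the difficulty. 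The remedy is to abandon the bubble and use constants, as the paper does.
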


\begin{thm}[Existence of solution to \eqref{main in} with the boundary singularity]\label{thm2}
Suppose $x_1 \in \partial \Omega$ and the mean curvature of $\partial \Omega$ at $x_1$, $H(x_1)$ is positive. Then there exists a positive solution to $\eqref{main in}$.
\end{thm}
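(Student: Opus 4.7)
The plan is variational. On $H^1(\Omega)$ define the $C^1$ functional
$$I_\lambda(u) := \frac{1}{2}\int_\Omega \bigl(|\nabla u|^2 + \lambda u^2\bigr)dx - \frac{1}{2^*(s)}\int_\Omega\biggl(\frac{|u|^{2^*(s)}}{|x-x_1|^s} + \frac{|u|^{2^*(s)}}{|x-x_2|^s}\biggr)dx,$$
set $\mathcal N_\lambda := \{u\in H^1(\Omega)\setminus\{0\} : \langle I_\lambda'(u),u\rangle = 0\}$, and seek a minimizer for $c_\lambda := \inf_{\mathcal N_\lambda} I_\lambda$. Since $\lambda>0$, the form $\int(|\nabla u|^2+\lambda u^2)\,dx$ is an equivalent norm on $H^1(\Omega)$, and the standard Nehari-manifold argument gives $c_\lambda>0$; a minimizer is a nontrivial weak solution of \eqref{main in}, and is positive by the strong maximum principle after replacing $u$ by $|u|$.

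The only obstruction is compactness. A concentration-compactness decomposition adapted to the two Hardy-Sobolev terms (extending \cite{ghka, Hashizume}) shows that a Palais-Smale sequence for $I_\lambda$ at level $c$ which fails to converge strongly must shed bubbles at $x_1$ and/or $x_2$. Because the Neumann condition on $\partial\Omega$ locally flattens to $\mathbb R^N_+$ and even reflection identifies a boundary bubble with a whole-space Hardy-Sobolev extremal, the concentration cost at a boundary singularity is exactly one half of the interior cost. The smallest possible bubbling cost is therefore
$$c^* := \tfrac12\cdot\tfrac{2-s}{2(N-s)}\,S_s(\mathbb R^N)^{(N-s)/(2-s)},$$
attained by a single bubble at $x_1$. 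Hence if $c_\lambda<c^*$ the PS condition holds at level $c_\lambda$ and the infimum is attained.

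To produce $c_\lambda<c^*$ I test with a profile concentrated at $x_1$. After a $C^2$ chart straightening $\partial\Omega$ near $x_1$ (sending $x_1\mapsto 0$ and $\nu(x_1)\mapsto e_N$), I transport the Hardy-Sobolev extremal
$$U_\varepsilon(y) = \varepsilon^{-(N-2)/2}\bigl(1+(|y|/\varepsilon)^{2-s}\bigr)^{-(N-2)/(2-s)}$$
and multiply by a cutoff $\eta$ that equals $1$ near $x_1$ and vanishes on a neighborhood of $x_2$. A careful expansion yields
$$\max_{t>0} I_\lambda(tU_\varepsilon) = c^* - C_{N,s}\,H(x_1)\,\varepsilon^{\gamma}\bigl(1+o(1)\bigr),$$
for an explicit $C_{N,s}>0$ and exponent $\gamma>0$ depending on $N$ and $s$ (with a logarithmic factor in the borderline dimension). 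The $\lambda u^2$ contribution is positive but of strictly higher order in $\varepsilon$, and the cross term $\int |U_\varepsilon|^{2^*(s)}/|x-x_2|^s\,dx$ is $O(\varepsilon^{N-2})$ since $|x_1-x_2|>0$ keeps that weight bounded on the $\varepsilon$-support of $U_\varepsilon$. Thus $H(x_1)>0$ and $\varepsilon$ small force $c_\lambda<c^*$.

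The main technical obstacle is the sign-determining mean-curvature expansion in the third paragraph: one must track the boundary correction through the straightening chart, verify that it dominates the $\lambda$-contribution and the remainders coming from $\eta$, and confirm that the second singular weight contributes only at higher order. This generalizes the single-singularity calculations of \cite{ghro, ghka, Hashizume} to the two-singularity setting; the extension is clean precisely because the test function is localized near $x_1$, so $|x-x_2|^{-s}$ effectively acts as a smooth bounded weight there.
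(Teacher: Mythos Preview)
Your strategy coincides with the paper's: localize the Hardy--Sobolev extremal at $x_1$, expand the energy, and use the positive mean curvature to push the critical level below the boundary threshold $\tfrac{2-s}{4(N-s)}S_s^{(N-s)/(2-s)}$; the paper uses the mountain-pass level rather than the Nehari infimum, but these agree here.

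One point needs repair. Your asserted order for the cross term is wrong: since $|x-x_2|^{-s}$ is bounded on $\operatorname{supp}\eta$, one has
\[
\int_\Omega \frac{|\eta U_\varepsilon|^{2^*(s)}}{|x-x_2|^s}\,dx \;\le\; C\int_{B_{2\delta}}|U_\varepsilon|^{2^*(s)}\,dx \;=\; O(\varepsilon^{s})
\]
in your scaling (equivalently $O(\epsilon^{s/(2-s)})$ in the paper's parameter), not $O(\varepsilon^{N-2})$. For $0<s\le 1$ this is of the \emph{same or lower} order than the mean-curvature correction (which is $O(\varepsilon)$, with a $|\ln\varepsilon|$ factor when $N=3$), so your claim that it ``contributes only at higher order'' fails in that range. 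The argument survives for a different reason: this term enters $I_\lambda$ with a minus sign, hence only lowers $\max_{t>0}I_\lambda(t\eta U_\varepsilon)$. The paper simply drops it via the one-line inequality
\[
J_\lambda(t_\varepsilon\eta U_\varepsilon)\;\le\;\sup_{t>0}\Bigl[\tfrac12 K_0^\varepsilon t^2-\tfrac{1}{2^*(s)}K_1^\varepsilon t^{2^*(s)}+\tfrac{\lambda}{2}K_3^\varepsilon t^2\Bigr],
\]
and you should do the same rather than rely on an order comparison that is not valid for all $s\in(0,2)$.

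A second remark: your summary ``$\max_t I_\lambda(tU_\varepsilon)=c^*-C_{N,s}H(x_1)\varepsilon^\gamma(1+o(1))$'' hides the actual work. For $N\ge 4$ both the gradient integral and the Hardy--Sobolev integral receive mean-curvature corrections of the \emph{same} order $\varepsilon$, and the sign of the net effect is not obvious; the paper establishes it by proving the coefficient inequality
\[
\lim_{\varepsilon\to0}\frac{I\!I(\varepsilon)}{I(\varepsilon)}=\frac{N-3}{(N+1-s)(N-2)^2}<\frac{1}{(N-2)^2}=\frac{(N-s)K_1}{(N-2)K_0}
\]
via an integration-by-parts identity. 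Citing \cite{ghka} for this is fine, but be aware that this comparison, not merely the presence of an $H(x_1)$ term, is the heart of the computation.
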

Note that Theorem \ref{thm2} asserts the singularity at boundary prevails the singularity in the interior.
To establish the existence theory, we study the functional 
\begin{align}\label{energy}
J_\lambda(u) =  \int_{\Omega}\frac12(|\nabla u|^2 + \lambda u^2) - \frac{1}{2^*(s)}\left(\frac{u_+^{2^*(s)}}{|x-x_1|^s}+\frac{u_+^{2^*(s)}}{|x-x_2|^s}\right)dx
\end{align}
defined on $H^1(\Omega)$ where $u_+ = \max(u,0)$. It is not hard to see that $J_\lambda$ is a $C^1$ functional and
\begin{align}\label{der_energy}
\langle J'_\lambda(u), \phi \rangle = \int_\Omega \nabla u \nabla \phi + \lambda u \phi - \left(\frac{u_+^{2^*(s)-1}}{|x-x_1|^s}\phi+\frac{u_+^{2^*(s)-1}}{|x-x_2|^s}\phi\right)dx
\end{align}
for $\phi \in H^1(\Omega)$. Moreover, by Sobolev embedding theorem, we obtain
\begin{align*}
J_\lambda(u) &= \int_{\Omega}\frac12(|\nabla u|^2 + \lambda u^2) - \frac{1}{2^*(s)}\left(\frac{u_+^{2^*(s)}}{|x-x_1|^s}+\frac{u_+^{2^*(s)}}{|x-x_2|^s}\right)dx\\
&\geq \int_{\Omega}\frac12(|\nabla u|^2 + \lambda u^2)dx - \frac{C_0}{2^*(s)}\left(C_{s,\delta}\int_\Omega |\nabla u|^2dx + \widetilde{c}(\delta)\int_{\Omega} \lambda u^2dx\right)^{\frac{2^*(s)}{2}}.
\end{align*}
Hence there exists $\al > 0$ and $\rho  > 0$ such that
\[J_\lambda(u) \geq \al \text{ if } \|u\|=\rho.\]
The scenario for the proof of the theorems is to apply the mountain pass lemma to attack the existence theory. However, the crux is to decide the threshold of the energy level so that the Palais-Smale condition would hold. We use concentration compactness principle to find this energy level.

\begin{rem}
The existence problem for \eqref{main in} with $x_1, x_2 \in \Omega$ and $\Lambda < \lambda$ is still open.
\end{rem}

In section 2, we investigate the threshold of the Palais-Smale condition for $J_\lambda$. In section 3 and 4, we prove the existence of solutions as described in Theorem \ref{thm1} and Theorem \ref{thm2}, respectively. In section 5, the positivity of solutions is established. In section 6, regularity of solution is considered. Lastly in section 7, we give brief accounts for the Neumann problem with the multiple singularities. Namely, the existence of solutions to
\begin{align*}
\left\{
\begin{array}{l}
-\Delta u + \lambda u = \sum_{i=1}^I\frac{|u|^{2^*(s_i)-2}u}{|x-x_{i}|^{s_i}}\text{ in }\Omega\\
\frac{\partial u}{\partial \nu} = 0 \text{ on }\partial\Omega
\end{array}\right.
\end{align*}
where $x_i \in \overline{\Omega}$ for $1 \leq i \leq I$ and $x_{i_1} \neq x_{i_2}$ if $i_1 \neq i_2$. 

\section{Palais-Smale Condition}
In this section, we investigate the threshold of the Palais-Smale condition for $J_\lambda$. In what follows, $S_s$ denotes $S_s(\mathbb{R}^N)$. First we recall the Hardy-Sobolev inequality for functions supported on neighborhood of boundary. For the Sobolev inequality, see Lemma 2.1\ in \cite{xjwang}.
The following lemma is obtained by applying the technique of \cite{xjwang}.
\begin{lem}[Proposition 2.3 in \cite{Hashizume}]\label{HS}
Let $h(x')$ is a $C^1$ function defined in $\{x' \in \mathbb{R}^{n-1}, |x'| <1 \}$ and satisfying $\nabla h(0) = 0$. Denote $\widetilde{B} = B_1(0) \cap \{x_n > h(x')\}$. Then for any $\phi \in H^1_0(B_1(0))$, we have
\begin{enumerate}
\item If $h \equiv 0$, then
\begin{align*}
2^{\frac{2-2^*(s)}{2^*(s)}}S_s\left(\int_{\widetilde B} \frac{|\phi|^{2^*(s)}}{|x|^s}dx\right)^{\frac{2}{2^*(s)}} \leq \int_{\widetilde{B}} |\nabla \phi|^2 dx .
\end{align*}
\item For any $\ep > 0$, there exists $\delta > 0$ such that if $|\nabla h| \leq \delta$, then
\begin{align*}
(2^{\frac{2-2^*(s)}{2^*(s)}}S_s -\ep)\left(\int_{\widetilde B} \frac{|\phi|^{2^*(s)}}{|x|^s}dx\right)^{\frac{2}{2^*(s)}} \leq \int_{\widetilde{B}} |\nabla \phi|^2 dx.
\end{align*}
\end{enumerate}
\end{lem}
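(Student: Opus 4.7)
The plan is to reduce both statements to the global Hardy--Sobolev inequality on $\mathbb{R}^N$. For part (1), since $\widetilde{B}$ is a half ball and the singularity $0$ lies on its flat part, I would extend $\phi$ by even reflection across $\{x_n=0\}$, setting $\widetilde\phi(x',x_n):=\phi(x',|x_n|)$. Because $\phi\in H^1_0(B_1(0))$, the reflected function lies in $H^1(\mathbb{R}^N)$ after extension by zero. Applying the global Hardy--Sobolev inequality to $\widetilde\phi$ and noting that both $|\widetilde\phi|^{2^*(s)}/|x|^s$ and $|\nabla\widetilde\phi|^2$ are invariant under $x_n\mapsto -x_n$ (the weight because $|x|$ depends only on $|x_n|$), each side equals twice its value on $\widetilde{B}$. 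Rearranging produces the factor $2^{(2-2^*(s))/2^*(s)}$ in front of $S_s$.

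For part (2) I would flatten the boundary via the diffeomorphism $\Psi(x',x_n):=(x',x_n-h(x'))$, whose Jacobian determinant is identically $1$ and which sends $\widetilde{B}$ into the upper half-space $\{y_n>0\}$. Setting $\psi(y):=\phi(\Psi^{-1}(y))$, the chain rule yields $|\nabla_x\phi|^2=|\nabla_y\psi|^2\bigl(1+\mathcal{O}(\|\nabla h\|_\infty)\bigr)$ pointwise, while the image $\Psi(\widetilde{B})$ is contained in a true half ball of radius $1+\mathcal{O}(\delta)$. Applying part (1) to $\psi$ on this slightly enlarged half ball (after a trivial rescaling) and converting the resulting inequality back to the $x$ variables produces the desired estimate, with all perturbations of order $\delta$.

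The main obstacle is the Hardy--Sobolev weight $|x|^{-s}$: since $\Psi$ does not preserve Euclidean distance to the origin, one cannot simply replace $|x|^{-s}$ by $|y|^{-s}$. This is where the hypothesis $\nabla h(0)=0$ is crucial. Combined with $\|\nabla h\|_\infty\leq\delta$ and $h(0)=0$ (implicit in the setup, since the origin lies on the graph of $h$), a mean value estimate gives $|h(y')|\leq\delta|y'|$, so
\[
|x|^2 \;=\; |y'|^2+(y_n+h(y'))^2 \;=\; |y|^2+2y_n h(y')+h(y')^2 \;=\; |y|^2\bigl(1+\mathcal{O}(\delta)\bigr)
\]
uniformly on $\widetilde{B}$. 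Consequently $|x|^{-s}=|y|^{-s}\bigl(1+\mathcal{O}(\delta)\bigr)$, and this multiplicative error can be absorbed into the $\ep$ in the statement, completing the reduction to part (1).
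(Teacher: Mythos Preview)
Your argument is correct and is precisely the standard route to this inequality. Note, however, that the paper does not supply its own proof of this lemma: it is quoted as Proposition~2.3 of \cite{Hashizume}, with the remark that it ``is obtained by applying the technique of \cite{xjwang}.'' That technique is exactly what you have outlined --- even reflection across the flat boundary for part~(1), and a boundary-flattening change of variables $\Psi(x',x_n)=(x',x_n-h(x'))$ reducing part~(2) to part~(1). So there is no discrepancy to report; your proposal matches the intended (cited) argument.

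Two minor remarks worth recording. First, you are right that the hypothesis $h(0)=0$ is being used tacitly; without it the estimate $|h(y')|\le\delta|y'|$ fails and the weight comparison $|x|^{-s}=(1+O(\delta))|y|^{-s}$ breaks down. In the paper's applications (Proposition~\ref{ps} and Section~4) the graph is always normalized so that the singular point lies at the origin on the boundary, so $h(0)=0$ holds. Second, in part~(1) it is worth saying explicitly that the even reflection of $\phi|_{\widetilde B}$ lies in $H^1_0(B_1(0))$ because $\phi$ vanishes on $\partial B_1(0)\cap\{x_n\ge 0\}$, and after reflection this covers the whole sphere; this is implicit in what you wrote but is the step that justifies invoking the global inequality with the sharp constant $S_s$.
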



\begin{prop}\label{ps}
The functional $J_\lambda$ defined in \eqref{energy} satisfies the $(PS)_c$ condition for
\begin{align*}
\left\{
\begin{array}{ll}
c < \frac{2-s}{2(N-s)}S_s^{\frac{2^*(s)}{2^*(s)-2}} & \text{ if } x_1, x_2 \in \Omega\\
c < \frac{2-s}{4(N-s)}S_s^{\frac{2^*(s)}{2^*(s)-2}} & \text{ if } x_1 \text{ or } x_2 \in \partial\Omega.
\end{array}
\right.
\end{align*}
\end{prop}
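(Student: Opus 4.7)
The plan is to implement a concentration-compactness argument in the spirit of Lions adapted to Hardy-Sobolev weights. Let $(u_n) \subset H^1(\Omega)$ satisfy $J_\lambda(u_n) \to c$ and $J'_\lambda(u_n) \to 0$. First I would establish boundedness of $(u_n)$ in $H^1(\Omega)$ by computing
\[
J_\lambda(u_n) - \tfrac{1}{2^*(s)}\langle J'_\lambda(u_n), u_n\rangle = \tfrac{2-s}{2(N-s)} \int_\Omega (|\nabla u_n|^2 + \lambda u_n^2)\,dx,
\]
and then extracting a subsequence with $u_n \rightharpoonup u$ in $H^1(\Omega)$, strongly in $L^2(\Omega)$, and pointwise a.e., with $u$ a critical point of $J_\lambda$. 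Setting $v_n := u_n - u$ and applying the Brezis--Lieb lemma to each Hardy-Sobolev integral produces the energy splitting
\[
c = J_\lambda(u) + \tfrac{1}{2}\int_\Omega |\nabla v_n|^2\,dx - \tfrac{1}{2^*(s)}\sum_{i=1}^{2}\int_\Omega \frac{v_{n,+}^{2^*(s)}}{|x-x_i|^s}\,dx + o(1),
\]
together with $\int_\Omega |\nabla v_n|^2\,dx = \sum_{i=1}^{2} \int_\Omega \frac{v_{n,+}^{2^*(s)}}{|x-x_i|^s}\,dx + o(1)$ from the derivative splitting.

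Next, I would apply Lions' concentration-compactness to the bounded non-negative measures $|\nabla v_n|^2\,dx \rightharpoonup \mu$ and $\frac{v_{n,+}^{2^*(s)}}{|x-x_i|^s}\,dx \rightharpoonup \nu_i$ in the sense of measures on $\overline{\Omega}$. Since $v_n \to 0$ in $L^2(\Omega)$ and the weight $|x-x_i|^{-s}$ is bounded on $\overline{\Omega}\setminus B_\rho(x_i)$ for every $\rho > 0$, the measure $\nu_i$ is supported only at $\{x_i\}$. For a smooth cutoff $\eta$ supported in a small ball around $x_i$, the Hardy-Sobolev inequality applied to $\eta v_n$ gives in the limit $\mu(\{x_i\}) \geq S_s\,\nu_i(\{x_i\})^{2/2^*(s)}$ when $x_i \in \Omega$, while after locally flattening $\partial\Omega$ near $x_i \in \partial\Omega$, Lemma \ref{HS}(2) yields $\mu(\{x_i\}) \geq 2^{(2-2^*(s))/2^*(s)} S_s\,\nu_i(\{x_i\})^{2/2^*(s)}$. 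Testing $J'_\lambda(u_n) \to 0$ against $\eta^2 u_n$ and passing to the limit supplies the balance $\mu(\{x_i\}) = \nu_i(\{x_i\})$.

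Writing $t_i := \nu_i(\{x_i\})$, the two inequalities above force either $t_i = 0$ or
\[
t_i \geq S_s^{2^*(s)/(2^*(s)-2)} \text{ if } x_i \in \Omega, \qquad t_i \geq \tfrac{1}{2}\,S_s^{2^*(s)/(2^*(s)-2)} \text{ if } x_i \in \partial\Omega.
\]
Since $u$ is a critical point, $\langle J'_\lambda(u), u\rangle = 0$ implies $J_\lambda(u) = \tfrac{2-s}{2(N-s)} \int_\Omega (|\nabla u|^2 + \lambda u^2)\,dx \geq 0$, so the splitting becomes $c \geq \tfrac{2-s}{2(N-s)} \sum_{i=1}^{2} t_i$. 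Hence each non-trivial atom contributes at least $\tfrac{2-s}{2(N-s)}\,S_s^{2^*(s)/(2^*(s)-2)}$ when $x_i \in \Omega$ and at least $\tfrac{2-s}{4(N-s)}\,S_s^{2^*(s)/(2^*(s)-2)}$ when $x_i \in \partial\Omega$. Under the stated strict upper bound on $c$ no such atom can exist, all $t_i = 0$, and $v_n \to 0$ strongly in $H^1(\Omega)$.

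The main obstacle will be the careful bookkeeping near a boundary singularity $x_i \in \partial\Omega$: a $C^2$ diffeomorphism flattening $\partial\Omega$ perturbs both the Dirichlet integral and the Hardy-Sobolev weight by lower-order terms that must be controlled uniformly on shrinking neighborhoods of $x_i$, so that Lemma \ref{HS}(2) delivers the sharp boundary constant $2^{(2-2^*(s))/2^*(s)} S_s$ in the limit; this factor is precisely what produces the one-half in the threshold when one of $x_1, x_2$ lies on $\partial\Omega$.
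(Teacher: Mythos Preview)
Your proposal is correct and follows essentially the same concentration--compactness route as the paper: boundedness of the $(PS)_c$ sequence, localization of the defect measures at $x_1,x_2$ only, the sharp local Hardy--Sobolev inequality (Lemma~\ref{HS}) near a boundary singularity to obtain the constant $2^{(2-2^*(s))/2^*(s)}S_s$, the balance $\mu(\{x_i\})=\nu_i(\{x_i\})$ from testing with a cutoff, and the resulting quantization dichotomy contradicting the assumed level $c$. The only cosmetic differences are that you organize the argument through the Brezis--Lieb splitting for $v_n=u_n-u$ and use the observation $J_\lambda(u)\geq 0$ (since $u$ is itself a critical point), whereas the paper works directly with $J_\lambda(u_m)-\tfrac12\langle J'_\lambda(u_m),u_m\rangle$; both lead to the same lower bound $c\geq \tfrac{2-s}{2(N-s)}\,t_i$ for each nonzero atom.
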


\begin{proof}
The proof is based on P. L. Lions' concentration-compactness principle \cite{lions, str}. Suppose $\{u_m\}$ be a $(PS)_c$ sequence. That is
\begin{align}
J_\lambda(u_m) = \int_{\Omega} \frac12 \left( |\nabla u_m|^2 + \lambda u_m^2 \right) - \frac{1}{2^*(s)} \left(\frac{(u_{m})_+^{2^*(s)}}{|x-x_1|^s} + \frac{(u_{m})_+^{2^*(s)}}{|x-x_2|^s} \right) dx \rightarrow c \label{psc1} \\ 
\langle J_\lambda'(u_m), \phi \rangle = \int_\Omega \nabla u_m \nabla \phi + \lambda u_m \phi - \left(\frac{(u_{m})_+^{2^*(s)-1}}{|x-x_1|^s} + \frac{(u_{m})_+^{2^*(s)-1}}{|x-x_2|^s}\phi \right)dx \rightarrow 0 \label{psc2}
\end{align}
as $m \rightarrow \infty$.
Plugging $\phi = u_m$ into \eqref{psc2}, we see that
\begin{align}\label{psc3}
\int_\Omega |\nabla u_m|^2 + \lambda u_m^2 - \left(\frac{(u_{m})_+^{2^*(s)}}{|x-x_1|^s} + \frac{(u_{m})_+^{2^*(s)}}{|x-x_2|^s}\right) = o(1)\|u_m\|_{H^1}
\end{align}
Taking off one-half of \eqref{psc3} from \eqref{psc1}, we obtain
\begin{align}\label{psc4}
\left(\frac12 - \frac{1}{2^*(s)}\right) \int_\Omega \left(\frac{(u_{m})_+^{2^*(s)}}{|x-x_1|^s} + \frac{(u_{m})_+^{2^*(s)}}{|x-x_2|^s}\right)dx  \leq c + 1 + o(\|u_m\|_{H^1})
\end{align}
Hence, we derive from \eqref{psc1} that 
\begin{align*}
\frac12 \int_{\Omega}(|\nabla u_m|^2 + \lambda u_m^2)dx &\leq c + \frac{N-2}{2-s}(c+1+o(1)\|u_m\|_{H^1})\\
&\leq C(\ep) + \ep\|u_m\|_{H^1}^2.
\end{align*}
Hence $\{u_m\}$ is a bounded sequence in $H^1(\Omega)$. So, up to a subsequence, we have the following weak convergence :
\begin{align*}
u_m &\rightharpoonup u \text{ in } H^1(\Omega),\\
u_m &\rightharpoonup u \text{ in } L^{\frac{2N}{N-2}}(\Omega),\\
u_m &\rightharpoonup u \text{ in } L^{2^*(s)}(\Omega, |x-x_1|^{-s}),\\
u_m &\rightharpoonup u \text{ in } L^{2^*(s)}(\Omega, |x-x_2|^{-s}).
\end{align*}
Here $L^{2^*(s)}(\Omega, |x-x_k|^{-s}), k=1,2$ is $L^{2^*(s)}$ function space equipped with the measure $|x-x_k|^{-s}dx$.

Then the concentration-compactness principle gives
\begin{align*}
&|\nabla u_m|^2 dx \rightharpoonup d\mu \geq |\nabla u|^{2}dx + \mu_1 \delta_{x_1} + \mu_2 \delta_{x_2} + \sum_{i \in I} \mu_i \delta_{x_i},\\
&|u_m|^{\frac{2N}{N-2}} dx  \rightharpoonup |u|^{\frac{2N}{N-2}}dx + \overline\mu_1 \delta_{x_1} + \overline\mu_2 \delta_{x_2} + \sum_{i \in I} \overline\mu_i \delta_{x_i},\\
&\frac{|u_m|^{2^*(s)}}{|x-x_1|^s} dx \rightharpoonup \frac{|u|^{2^*(s)}}{|x-x_1|^s}dx + \wt\nu_1\delta_{x_1} + \wt\nu_2\delta_{x_2} + \sum_{i \in I} \wt\nu_i \delta_{x_i},\\ 
&\frac{|u_m|^{2^*(s)}}{|x-x_2|^s} dx \rightharpoonup \frac{|u|^{2^*(s)}}{|x-x_2|^s}dx + \overline\nu_1\delta_{x_1} + \overline\nu_2\delta_{x_2} + \sum_{i \in I} \overline\nu_i \delta_{x_i}
\end{align*}
in the sense of measure where $\delta_x$ is the Dirac-mass of mass 1 concentrated at $x \in \mathbb{R}^N$. Here, $I$ is at most countable index set and the numbers $\mu_i, \overline{\mu}_i, \wt\nu_i, \overline{\nu}_i \geq 0$.

We will analyse $\mu_i, \overline{\mu}_i, \wt\nu_i, \overline{\nu}_i$ to show that all of them is $0$.  Let $\phi$ be $C^1$ function such that $\phi(x) = 1$ on $B_{1}(0)$ and $\phi(x) = 0$ on $\mathbb{R}^N \setminus B_2(0)$. We define $\phi^l(x) = \phi(lx)$. Fix $l >0$.
Then for $k=1$ or $2$, we have from weak convergence
\begin{align*}
&\quad \int_{\Omega} \frac{|u_m|^{2^*(s)}}{|x-x_k|^s}(1 - \phi^l(\cdot - x_k))dx\\
&\rightarrow \int_{\Omega} \frac{|u|^{2^*(s)}}{|x-x_k|^s}(1 - \phi^l(\cdot - x_k))dx + \sum_{x_i \in \mathbb{R}^N \setminus B_{\frac 2l}(x_k)} \widehat \nu_i (1 - \phi^l(x_i))
\end{align*}
as $m \rightarrow \infty$, where $\widehat \nu_i  = \wt\nu_i$ or $\overline\nu_i$ when $k=1$ or $k=2$, respectively,
Since $2^*(s) < \frac{2N}{N-2}$, we have from strong convergence
\[\int_{\Omega} \frac{|u_m|^{2^*(s)}}{|x-x_k|^s}(1 - \phi^l(\cdot - x_k))dx \rightarrow \int_{\Omega} \frac{|u|^{2^*(s)}}{|x-x_k|^s}(1 - \phi^l(\cdot - x_k))dx,\]
as $m \rightarrow \infty$. So we obtain
\begin{align*}
\left\{
\begin{array}{c}
\wt\nu_i = 0 \text{ if } x_i \in \mathbb{R}^N \setminus B_{\frac 2l}(x_1),\\
\cj\nu_i = 0\text{ if } x_i \in \mathbb{R}^N \setminus B_{\frac 2l}(x_2).
\end{array}
\right.
\end{align*}
Letting $l \rightarrow \infty$, we see that
\[\frac{|u_m|^{2^*(s)}}{|x-x_1|^s}dx \rightharpoonup \frac{|u|^{2^*(s)}}{|x-x_1|^s}dx + \nu_1\delta_{x_1} \text{ and }\frac{|u_m|^{2^*(s)}}{|x-x_2|^s}dx \rightharpoonup \frac{|u|^{2^*(s)}}{|x-x_2|^s}dx + \nu_2\delta_{x_2}\] 
where $\nu_1 : = \wt\nu_1$ and $\nu_2 := \cj\nu_2$.

Now we shall show some relation between $\nu_k$ and $\mu_k$ for $k=1,2$. We consider $v_m = u_m - u$ and
\[d\omega_m := \left( \frac{|u_m|^{2^*(s)}}{|x-x_k|^s} - \frac{|u|^{2^*(s)}}{|x-x_k|^s}\right) dx = \frac{|u_m-u|
^{2^*(s)}}{|x-x_k|^s}dx + o(1).\]
In case of $x_k \in \Omega$, we have
\begin{align*}
\int_{\Omega}|\phi^l(\cdot-x_k)|^{2^*(s)}d\omega_m &= \int_{\Omega} \frac{|\phi^l(\cdot-x_k)v_m|^{2^*(s)}}{|x-x_k|^s}dx + o(1)\\
&\leq S_s^{-\frac{2^*(s)}{2}}(\int_{\Omega}|\nabla(\phi^l(\cdot-x_k)v_m)|^2dx)^{\frac{2^*(s)}{2}}+ o(1)
\end{align*}
For fixed $l$, we see that $\phi^l, \nabla \phi^l \in L^\infty(\Omega)$. Moreover, since $\nabla v_m \rightarrow 0$ weakly in $L^2$, we have $v_m \rightarrow 0 $ in $L^p$ for $0 < p < \frac{2N}{N-2}$ by Rellich-Kondrakov Theorem. So we get
\begin{align*}
&\quad \int_\Omega |\nabla(\phi^l(\cdot - x_k)v_m)|^2 dx\\
&\leq \int_\Omega |\nabla(\phi^l(\cdot - x_k))|^2|v_m|^2 dx + C_l\left(\int_\Omega |v_m|^2 dx\right)^\frac12 \left(\int_\Omega|\nabla v_m|^2dx\right)^\frac12\\
&\quad + \int_\Omega |\phi^l(\cdot - x_k)|^2|\nabla v_m|^2 dx\\
&= \int_\Omega |\phi^l(\cdot - x_k)|^2|\nabla v_m|^2 dx + o(1)
\end{align*}
Hence, 
\begin{align*}
\int_{\Omega}|\phi^l(\cdot-x_k)|^{2^*(s)}d\omega_m  &= S_s^{-\frac{2^*(s)}{2}}(\int_{\Omega}(\phi^l(\cdot-x_k))^2|\nabla v_m|^2dx)^{\frac{2^*(s)}{2}}+ o(1).
\end{align*}
In case of $x_k \in \partial\Omega$, applying Lemma \ref{HS}, we see that
\begin{align*}
&\quad \int_{\Omega}|\phi^l(\cdot-x_k)|^{2^*(s)}d\omega_m\\
&= \int_{\Omega} \frac{|\phi^l(\cdot-x_k)v_m|^{2^*(s)}}{|x-x_k|^s}dx+ o(1)\\
&\leq (2^{\frac{2^*(s) - 2}{2}}S_s^{-\frac{2^*(s)}{2}} + \ep_l)\left(\int_{\Omega}|\nabla(\phi^l(\cdot-x_k)v_m)|^2dx\right)^{\frac{2^*(s)}{2}}+ o(1)\\
&= (2^{\frac{2^*(s) - 2}{2}}S_s^{-\frac{2^*(s)}{2}} + \ep_l)\left(\int_{\Omega}(\phi^l(\cdot-x_k))^2|\nabla v_m|^2dx\right)^{\frac{2^*(s)}{2}}+ o(1)
\end{align*}
where $\ep_l \rightarrow 0$ as $l \rightarrow \infty.$
By letting $m,l \rightarrow \infty$, we obtain
\begin{align}\label{ssmunu}
\left\{\begin{array}{cl}
S_s \nu_k^{\frac{2}{2^*(s)}} \leq \mu_k &\text{ if } x_k \in \Omega\\
2^{\frac{2 - 2^*(s)}{2^*(s)}}S_s \nu_k^{\frac{2}{2^*(s)}} \leq \mu_k &\text{ if } x_k \in \partial \Omega
\end{array}
\right.
\end{align}
for $k=1,2$.

To complete the proof, we need to show that $\mu_i = 0$ for $i=1,2$ or $i \in I$.
For $i \in I$, by testing $u_m(x)\phi^l(x-x_i)$, we have
\begin{align*}
\langle J'_\lambda(u_m), u_m\phi^l(\cdot - x_i)\rangle &=\int_\Omega \nabla u_m \nabla (u_m\phi^l(\cdot-x_i)) + \lambda u_m u_m\phi^l(\cdot-x_i)\\
&\quad - \left(\frac{(u_m)_+^{2^*(s)}}{|x-x_1|^s}\phi^l(\cdot-x_i)+\frac{(u_m)_+^{2^*(s)}}{|x-x_2|^s}\phi^l(\cdot-x_i)\right).
\end{align*}
One can readily check that
\begin{align*}
&\lim_{l \rightarrow \infty}\lim_{m \rightarrow \infty} \int_\Omega \nabla u_m \nabla u_m\phi^l(\cdot-x_i) \geq \mu_i,\\
&\lim_{l \rightarrow \infty}\lim_{m \rightarrow \infty} \int_\Omega \lambda u_m u_m\phi^l(\cdot-x_i) = 0,\\
&\lim_{l \rightarrow \infty}\lim_{m \rightarrow \infty} \int_\Omega \frac{(u_m)_+^{2^*(s)}}{|x-x_1|^s}\phi^l(\cdot-x_i)=0,\\
&\lim_{l \rightarrow \infty}\lim_{m \rightarrow \infty} \int_\Omega \frac{(u_m)_+^{2^*(s)}}{|x-x_2|^s}\phi^l(\cdot-x_i) = 0.
\end{align*}
We claim that
\[\lim_{l \rightarrow \infty}\lim_{m \rightarrow \infty}\int_\Omega \nabla u_m u_m \nabla\phi^l(\cdot-x_i) = 0.\]
Let $\Omega^l_i := \Omega \cap supp(\nabla\phi^l(\cdot-x_i))$. First we consider the case where $x_i$ is not a limit point of $\{x_k : k \in I\}$. In this case, we see that
\[x_i \notin \Omega^l_i \text{ for all }l\]
and 
\[x_k \notin \Omega^l_i \text{ for }k \neq i \text{ as } l \text{ is sufficiently large}.\]
Hence we have
\begin{align*}
&\quad \left|\lim_{l \rightarrow \infty}\lim_{m \rightarrow \infty}\int_\Omega \nabla u_m u_m \nabla\phi^l(\cdot-x_i)\right|\\
&=\lim_{l \rightarrow \infty}\left|\int_{\Omega}\nabla u \cdot \nabla \phi^l(\cdot - x_i)u dx\right|\\
&\leq \lim_{l \rightarrow \infty}\left(\int_{\Omega^l_j}|\nabla u|^2dx\right)^\frac12\left(\int_{\Omega^l_j}|u|^{\frac{2N}{N-2}}dx\right)^\frac{N-2}{2N}\left(\int_{\Omega^l_j} |\nabla\phi^l(\cdot-x_i)|^{N}dx\right)^\frac1N \\
&\leq \lim_{l \rightarrow \infty} C\left(\int_{\Omega^l_j}|\nabla u|^2dx\right)^\frac12\left(\int_{\Omega^l_j}|u|^{\frac{2N}{N-2}}dx\right)^\frac{N-2}{2N}\\
&=0.
\end{align*}
In the case of $x_i$ is a limit point of $\{x_k : k \in I\}$, there is additional term
\[(\sum_{k \in I, x_k \in \Omega^l_j} \mu_k)^{\frac12}(\sum_{k \in I, x_k \in \Omega^l_j} \overline{\mu}_k)^{\frac{N-2}{2N}} \leq \left(\int_{\Omega^l_j}|\nabla u|^2dx\right)^\frac12\left(\int_{\Omega^l_j}|u|^{\frac{2N}{N-2}}dx\right)^\frac{N-2}{2N} < \infty\]
which also goes to $0$ as $l \rightarrow \infty.$
So we get
\begin{align*}
\mu_i \leq  \lim_{l \rightarrow \infty} \langle J'_\lambda(u_m), u_m\phi^l(\cdot - x_i)\rangle = \lim_{l \rightarrow \infty} o(\|u_m\phi^l\|_{H^1}) = 0 \text{ for }  i \in I.
\end{align*}

Using the same argument, we have for $i = 1,2$,
\begin{align}\label{munu}
\mu_i = \lim_{l \rightarrow \infty}\lim_{m \rightarrow \infty} \int_\Omega \nabla u_m \nabla u_m\phi^l(\cdot-x_i)
=\lim_{l \rightarrow \infty}\lim_{m \rightarrow \infty} \int_\Omega \frac{(u_m)_+^{2^*(s)}}{|x-x_i|^s}\phi^l(\cdot-x_i)\leq \nu_i
\end{align}
If we assume $\mu_i > 0$ for $i=1$ or $2$, then
\[c = \lim_{m \rightarrow \infty } J_\lambda(u_m) - \frac12\langle J'_\lambda(u_m), u_m \rangle \geq (\frac12 - \frac{1}{2^*(s)})\mu_i.\]
But from \eqref{ssmunu} and \eqref{munu}, we have
\begin{align*}
\left\{\begin{array}{cl}
S_s \mu_i^{\frac{2}{2^*(s)}} \leq \mu_i \Leftrightarrow \mu_i \geq S_s^{\frac{2^*(s)}{2^*(s)-2}}\\
2^{\frac{2 - 2^*(s)}{2^*(s)}}S_s \mu_i^{\frac{2}{2^*(s)}} \leq \mu_i \Leftrightarrow \mu_i \geq \frac12 S_s^{\frac{2^*(s)}{2^*(s)-2}}
\end{array}
\right.
\end{align*}
which is a contradiction. This prove Proposition \ref{ps}.
\end{proof}

\section{Existence of solution to \eqref{main in} for small $\lambda$}
In this section, we show the existence theory of Theorem \ref{thm1}.
Plugging constant function $c$ into the functional $J_\lambda$, we have
\[J_\lambda(c) = \frac12 |\Omega| \lambda c^2 - \frac{1}{2^*(s)}C_1c^{2^*(s)}\]
where $C_1 = \int_\Omega \frac{1}{|x-x_1|^s}+\frac{1}{|x-x_2|^s} dx$.
Since $2^*(s) > 2$, we see that $J_\lambda(c) < 0$ for sufficiently large $c$.
From the observation
\[\frac{d}{dc}\left(J_\lambda(c)\right) = 0 \Leftrightarrow c=0 \text{ or } \left(\frac{\lambda|\Omega|}{C_1}\right)^{\frac{1}{2^*(s)-2}},\]
we see that
\begin{align*}
\max_{c}J_\lambda (c) &= J_\lambda\left((\frac{\lambda|\Omega|}{C_1})^{\frac{1}{2^*(s)-2}}\right)\\
&= \frac12|\Omega|\lambda\left(\frac{\lambda|\Omega|}{C_1}\right)^{\frac{2}{2^*(s)-2}} - \frac{1}{2^*(s)}C_1\left(\frac{\lambda|\Omega|}{C_1}\right)^{\frac{2^*(s)}{2^*(s)-2}}
\end{align*}
which is less than
\begin{align*}
\left\{
\begin{array}{ll}
\frac{2-s}{2(N-s)}S_s^{\frac{2^*(s)}{2^*(s)-2}} & \text{ if } x_1, x_2 \in \Omega\\
\frac{2-s}{4(N-s)}S_s^{\frac{2^*(s)}{2^*(s)-2}} & \text{ if } x_1 \text{ or } x_2 \in \partial\Omega
\end{array}
\right.
\end{align*}
provided the positive solution parameter $\lambda$ is small enough.

\section{Existence of solution to \eqref{main in} with boundary singularity}
In this section, we prove the existence of a solution in Theorem \ref{thm2}. We shall follow the strategy of \cite{xjwang, ghka} to prove Theorem \ref{thm2}. We may assume $x_1 = (0, \cdots, 0) \in \partial \Omega$ and the mean curvature $H(0)$ is positive.
Then, up to rotation, the boundary near the origin can be represented by 
\[x_n = h(x') = \frac12 \sum_{i=1}^{N-1}\al_i x_i^2 + o(|x'|^2)\]
where $x' = (x_1, x_2, \cdots, x_{N-1}) \in D_\delta(0) = B_\delta(0) \cap \{x_N=0\}$ for some $\delta > 0$. Here $\al_1, \al_2, \cdots, \al_{N-1}$ are the principal curvature of $\partial \Omega$ at $0$ and the mean curvature $\sum_{i=1}^{N-1} \al_i > 0$. Denote
\[g(x') = \frac12\sum_{i=1}^{N-1}\al_ix_i^2.\] 

Consider
\begin{equation*}
U_\ep(x):=\ep^{\frac{N-2}{2(2-s)}}(\ep+|x|^{2-s})^{\frac{2-N}{2-s}}
\end{equation*}
for small parameter $\e>0$.
Then,
it follows that
\begin{equation}\label{musnorm}
\int_{\re^N} |\nabla U_1|^2dx / (\int_{\re^N} \frac{|U_1|^{2^*(s)}}{|x|^s}dx)^{\frac{N-2}{N-s}} = S_s.
\end{equation}
Choose $\delta$ such that $x_2 \notin B_{3\delta}(0)$. Set a cut-off function $\eta$ such that
\begin{equation*}
\quad \eta \in C_c^\infty(\re^N), \quad 0 \leq \eta \leq 1, \quad \eta =1 \ {\rm in} \ B_{\delta}(0), \quad\eta = 0 \ {\rm in}\ \re^N\setminus{B_{2\delta}(0)}.
\end{equation*}
Note that from $2^*(s) >2$,
\begin{align*}
J_\lambda(T\eta U_\ep) &= \int_{\Omega}\frac{T^2}{2}(|\nabla (\eta U_\ep)|^2 + \lambda (\eta U_\ep)^2) - \frac{T^{2^*(s)}}{2^*(s)}\left(\frac{(\eta U_\ep)_+^{2^*(s)}}{|x-x_1|^s}+\frac{(\eta U_\ep)_+^{2^*(s)}}{|x-x_2|^s}\right)dx\\
 &< 0
\end{align*}
for sufficiently large $T$.
We define
\begin{align*}
\mathbb{P} =
\left\{p(t) \Big|
\begin{array}{ll}
 p(t) : [0,1] \rightarrow H^1(\Omega)\text{ is continous map with }\\
p(0) = 0 \in H^1(\Omega)\text{ and }p(1) = T\eta U_\ep(x)|_\Omega
\end{array}
\right\}.
\end{align*}
Let 
\[c^* = \inf_{p(t) \in \mathbb{P}}\sup_{0 \leq t \leq 1} \{ J_\lambda(p(t)) \}.\]
Then, thanks to Proposition \ref{ps}, it suffices to show 
\begin{align}\label{threshold}
c^* < \frac{2-s}{4(N-s)} S_s^{\frac{2^*(s)}{2^*(s)-2}}.
\end{align}
In the following discussion, we denote
\begin{align*}
K_0^\ep := \int_{\Omega} |\nabla(\eta U_\ep)|^2 dx,\ &K_1^\ep := \int_\Omega \frac{(\eta U_\ep)^{2^*(s)}}{|x - x_1|^s}dx, K_3^\ep := \int_\Omega (\eta U_\ep)^2dx\\ &\text{ and } K_2^\ep := \int_\Omega \frac{(\eta U_\ep)^{2^*(s)}}{|x - x_2|^s}dx.
\end{align*}

First we deal with $K_0^\ep$. By using Leibniz rule, one has
\begin{align*}
K_0^\ep &= \int_{\Omega} |\nabla(\eta U_\ep)|^2 dx\\
&= \int_{\Omega} |\nabla \eta|^2 |U_\ep|^2 dx + 2\int_\Omega \eta U_\ep \nabla \eta \cdot\nabla U_\ep dx + \int_{\Omega} |\eta|^2 |\nabla U_\ep|^2 dx.
\end{align*}
Since $supp (\nabla \eta) \subset B_{2\delta}(0) \setminus B_{\delta}(0)$, for sufficiently small $\ep$,
\begin{align*}
\int_\Omega |\nabla \eta|^2 |U_\ep|^2 dx &= \int_\Omega |\nabla \eta|^2 \ep^{\frac{N-2}{2-s}}(\ep + |x|^{2-s})^{\frac{2(2-N)}{2-s}}dx\\
&\leq \int_\Omega |\nabla \eta|^2 \ep^{\frac{N-2}{2-s}} (2\delta)^{{2(2-N)}}dx\\
&\leq C_{1,\delta} \ep^{\frac{N-2}{2-s}}.
\end{align*}
Similarly, it follows that
\begin{align*}
&\quad \left|\int_\Omega \eta  U_\ep \nabla \eta \cdot \nabla U_\ep dx\right|\\
&= \Big|\int_\Omega \eta \ep^{\frac{N-2}{2(2-s)}}(\ep + |x|^{2-s})^{\frac{2-N}{2-s}}\\
&\qquad\qquad \times \nabla \eta \cdot \left((2-N)\ep^{\frac{N-2}{2(2-s)}}(\ep+|x|^{2-s})^{\frac{2-N}{2-s}-1}|x|^{1-s}\frac{x}{|x|}\right)dx\Big|\\
&\leq C_{2,\delta} \ep^{\frac{N-2}{2-s}}.
\end{align*}
The last term is more delicate. We consider the case of $N=3$ and the case of $N \geq 4$ separately.
When $N = 3$, we have
\begin{align*}
\int_\Omega |\eta|^2|\nabla U_\ep|^2 dx &= \int_{\mathbb{R}^N_+}|\nabla U_\ep|^2dx - \int_{D_\delta(0)}\int_0^{h(x')}|\nabla U_\ep|^2 dx_Ndx' + o(\ep^{\frac{1}{2-s}}).
\end{align*}
Since $a|x'|^2 \leq h(x') \leq A|x'|^2$ on $D_\delta(0)$ for some $0 < a \leq A < \infty$, we have
\begin{align*}
\int_{D_\delta(0)}\int_0^{h(x')}|\nabla U_\ep|^2 dx_Ndx' &\geq C \int_{D_\delta(0)} \frac{\ep^{\frac{N-2}{2-s}}a|x'|^{4-2s}}{(\ep + |x'|^{2-s})^\frac{2(N-s)}{2-s}} dx'\\
&\geq C\ep^{\frac{1}{2-s}}|\ln\ep|.
\end{align*}
When $N \geq 4$, we have
\begin{align*}
&\quad \int_\Omega |\eta|^2|\nabla U_\ep|^2 dx\\ &= \int_{\mathbb{R}^N_+}|\nabla U_\ep|^2dx - \int_{D_\delta(0)}\int_0^{h(x')}|\nabla U_\ep|^2 dx_Ndx' + O(\ep^{\frac{N-2}{2-s}})\\
&=\frac12 K_0 - \int_{\mathbb{R}^{N-1}}\int_0^{g(x')}|\nabla U_\ep|^2 dx_Ndx' - \int_{D_\delta(0)}\int_{g(x')}^{h(x')}|\nabla U_\ep|^2 dx_Ndx' + O(\ep^{\frac{N-2}{2-s}})
\end{align*}
where
\[K_0 := \int_{\mathbb{R}^N} |\nabla U_\ep|^2dx = (N-2)^2 \int_{\mathbb{R}^N} \frac{|y|^{(2-2s)}}{(1+|y|^{2-s})^{\frac{2(N-s)}{2-s}}}dy.\]
Observe that
\begin{align}\label{iep}
\begin{split}
I(\ep)&:= \int_{\mathbb{R}^{N-1}} \int_0^{g(x')} |\nabla U_\ep|^2 dx_Ndx' \\
&=(N-2)^2\ep^{\frac{N-2}{2-s}}\int_{\mathbb{R}^{N-1}}\int_0^{g(x')} \frac{|x|^{2-2s}}{(\ep+|x|^{2-s})^{\frac{2(N-s)}{2-s}}}dx_Ndx'\\
&=(N-2)^2\int_{\mathbb{R}^{N-1}}\int_0^{g(y')\ep^{\frac{1}{2-s}}} \frac{|y|^{2-2s}}{(1+|y|^{2-s})^{\frac{2(N-s)}{2-s}}}dy_Ndy'.
\end{split}
\end{align}
So we have 
\begin{align*}
\lim_{\ep \rightarrow 0}\ep^{-\frac{1}{2-s}}I(\ep) &= (N-2)^2 \int_{\mathbb{R}^{N-1}}\frac{|x'|^{2-2s}g(x')}{(1+|x'|^{2-s})^{\frac{2(N-s)}{2-s}}}dx'\\
&= \frac{(N-2)^2}{2}\int_{\mathbb{R}^{N-1}}\frac{|x'|^{2-2s}\sum_{i=1}^{N-1}\al_i|x_i|^2}{(1+|x'|^{2-s})^{\frac{2(N-s)}{2-s}}}dx'\\
&= \frac{(N-2)^2}{2}\sum_{i=1}^{N-1}\al_i\int_{\mathbb{R}^{N-1}}\frac{|x'|^{2-2s}|x_i|^2}{(1+|x'|^{2-s})^{\frac{2(N-s)}{2-s}}}dx'\\
&= (\sum_{i=1}^{N-1}\al_i)\frac{(N-2)^2}{2(N-1)}\int_{\mathbb{R}^{N-1}}\frac{|x'|^{4-2s}}{(1+|x'|^{2-s})^{\frac{2(N-s)}{2-s}}}dx'.
\end{align*}
which leads to
\begin{align*}
I(\ep) = O(\ep^{\frac{1}{2-s}}).
\end{align*}
The curvature assumption ($H(0) > 0$) implies
\[I(\ep) > 0.\]
Moreover,
\begin{align*}
I_1(\ep) &:= \int_{D_\delta(0)}\int_{g(x')}^{h(x')}|\nabla U_\ep|^2 dx_ndx'\\
&= (N-2)^2 \ep^{\frac{N-2}{2-s}}\int_{D_\delta(0)}\int_{g(x')}^{h(x')}\frac{|x|^{2-2s}}{(\ep + |x|^{2-2s})^{\frac{2(N-s)}{2-s}}}dx_Ndx'\\
&\leq C(\delta, N)(N-2)^2 \ep^{\frac{N-2}{2-s}} \int_{D_\delta(0)} \frac{|h(x') - g(x')|}{(\ep+|x'|^{2-s})^{\frac{2(N-s)}{2-s}-1}}dx'
\end{align*}
Since $h(x') = g(x') + o(|x'|^2)$, for any $\sigma > 0$, there exists $C(\sigma) > 0$ such that
\[|h(x') - g(x')| \leq \sigma|x'|^2 + C(\sigma)|x'|^{\frac52}.\]
So we have
\begin{align*}
I_1(\ep) &\leq C\ep^{\frac{N-2}{2-s}} \int_{D_\delta(0)} \frac{\sigma|x'|^2 + C(\sigma)|x'|^\frac52}{(\ep+|x'|^{2-s})^{\frac{2(N-s)}{2-s}-1}}dx' \\
&\leq C\ep^{\frac{1}{2-s}}(\sigma + C(\sigma)\ep^{\frac{1}{2(2-s)}})
\end{align*}
which implies
\begin{align*}
I_1(\ep)&= O(\ep^{\frac{1}{2-s}}).
\end{align*}
Therefore we obtain
\begin{align}\label{k1ep}
K_0^\ep = 
\left\{\begin{array}{ll}
\frac12 K_0 - C\ep^{\frac{1}{2-s}}|\ln\ep| + O(\ep^{\frac{1}{2-s}}) &\text{ when } N = 3,\\
\frac12 K_0 - I(\ep) + O(\ep^{\frac{1}{2-s}}) &\text{ when } N \geq 4.
\end{array}
\right.
\end{align}
On the other hand, we have
\begin{align*}
K_1^\ep &= \int_{\mathbb{R}^N_+} \frac{|U_\ep|^{2^*(s)}}{|x|^s}dx - \int_{D_\delta(0)}\int_0^{h(x')}\frac{|U_\ep|^{2^*(s)}}{|x|^s}dx_Ndx' + O(\ep^\frac{N-s}{2-s})\\
&= \frac12 K_1 - \int_{\mathbb{R}^{N-1}}\int_0^{g(x')}\frac{|U_\ep|^{2^*(s)}}{|x|^s}dx_Ndx'\\ 
&\quad - \int_{D_\delta(0)}\int_{g(x')}^{h(x')}\frac{|U_\ep|^{2^*(s)}}{|x|^s}dx_Ndx' + O(\ep^\frac{N-s}{2-s}) 
\end{align*}
where
\begin{align*}
K_1 &= \int_{\mathbb{R}^N} \frac{U_\ep^{2^*(s)}}{|x|^s}dx = \int_{\mathbb{R}^N} \frac{\ep^{\frac{2^*(s)(N-2)}{2(2-s)}}}{|x|^s(\ep + |x|^{2-s})^{\frac{2^*(s)(N-2)}{2-s}}}dx\\
&= \int_{\mathbb{R}^N}\frac{1}{|y|^s(1+|y|^{2-s})^{\frac{2(N-s)}{2-s}}}dy.
\end{align*}
Observe that
\begin{align}\label{iiep}
\begin{split}
I\!I(\ep) &:= \int_{\mathbb{R}^{N-1}} \int_0^{g(x')} \frac{|U_\ep|^{2^*(s)}}{|x|^s}dx_Ndx' \\
&= \int_{\mathbb{R}^{N-1}}\int_0^{\ep^{\frac{1}{2-s}}g(y')} \frac{1}{|y|^s(1+|y|^{2-s})^{\frac{2(N-s)}{2-s}}}dy_Ndy'.
\end{split}
\end{align}
So, we have
\begin{align*}
\lim_{\ep \rightarrow 0} \ep^{-\frac{1}{2-s}}I\!I(\ep) &= \int_{\mathbb{R}^{N-1}} \frac{g(y')}{|y'|^s(1+|y'|^{2-s})^{\frac{2(N-s)}{2-s}}}dy'\\
&=\frac12 \int_{\mathbb{R}^{N-1}} \frac{\sum_{i=1}^{N-1}\al_i|y_i|^2}{|y'|^s(1+|y'|^{2-s})^{\frac{2(N-s)}{2-s}}}dy'\\
&=\frac{\sum_{i=1}^{N-1}\al_i}{2(N-1)}\int_{\mathbb{R}^{N-1}} \frac{|y'|^2}{|y'|^s(1+|y'|^{2-s})^{\frac{2(N-s)}{2-s}}}dy'
\end{align*}
which leads to
\[I\!I(\ep) = O(\ep^{\frac{1}{2-s}}).\]
The curvature assumption ($H(0) > 0$) implies
\[I\!I(\ep) > 0.\]
Similarly, we can get
\[\int_{D_\delta(0)}\int_{g(x')}^{h(x')} \frac{|U_\ep|^{2^*(s)}}{|x|^s}dx_Ndx' = O(\ep^\frac{1}{2-s}).\]
Thus, we obtain
\begin{align}\label{k2ep}
K_1^\ep = \frac12 K_1 - I\!I(\ep) + O(\ep^{\frac{1}{2-s}}).
\end{align}

Moreover, direct calculation gives
\begin{align*}
K_3^\ep =\int_\Omega (\eta U_\ep)^2 dx =
\left\{\begin{array}{ll}
O(\ep^{\frac{1}{2-s}}), & N=3,\\
O(|\ep^{\frac{2}{2-s}}\ln \ep|), & N=4,\\
O(\ep^{\frac{2}{2-s}}), & N \geq 5.
\end{array}
\right.
\end{align*}
Actually when $N = 3$, we have
\begin{align*}
\int_\Omega (\eta U_\ep)^2 dx &= \int_\Omega |\eta|^2 \ep^{\frac{1}{2-s}}(\ep + |x|^{2-s})^{-\frac{2}{2-s}}dx\\
&\leq \int_{B_{2\delta(0)}} \ep^{\frac{1}{2-s}}(\ep + |x|^{2-s})^{-\frac{2}{2-s}}dx \\
&\leq C_N\ep^{\frac{2}{2-s}} \int_0^{2\delta\ep^{-\frac{1}{2-s}}} (1 + r^{2-s})^{-\frac{2}{2-s}}r^{2}dr = O(\ep^{\frac{1}{2-s}}).
\end{align*}
When $N = 4$, we see that
\begin{align*}
\int_\Omega (\eta U_\ep)^2 dx &= \int_\Omega |\eta|^2 \ep^{\frac{2}{2-s}}(\ep + |x|^{2-s})^{-\frac{4}{2-s}}dx\\
&\leq \int_{B_{2\delta(0)}} \ep^{\frac{2}{2-s}}(\ep + |x|^{2-s})^{-\frac{4}{2-s}}dx \\
&\leq C_N\ep^{\frac{2}{2-s}} \int_0^{2\delta\ep^{-\frac{1}{2-s}}} (1 + r^{2-s})^{-\frac{4}{2-s}}r^{N-1}dr = O(|\ep^{\frac{2}{2-s}}\ln\ep|).
\end{align*}
When $N \geq 5$, we have
\begin{align*}
\int_\Omega (\eta U_\ep)^2 dx &= \int_\Omega |\eta|^2 \ep^{\frac{N-2}{2-s}}(\ep + |x|^{2-s})^{\frac{2(2-N)}{2-s}}dx\\
&\leq \int_{\mathbb{R}^N} \ep^{\frac{N-2}{2-s}}(\ep + |x|^{2-s})^{\frac{2(2-N)}{2-s}}dx = O(\ep^{\frac{2}{2-s}}).
\end{align*}

Lastly, we are concerned about $K_2^\ep$. Since $x_2 \notin B_{3\delta}(0)$ and $supp(\eta) \subset B_{2\delta}(0)$, we see that
\begin{align*}
&\quad K^\ep_2=\int_\Omega \frac{|\eta U_\e|^{2^*(s)}}{|x - x_2|^s} dx\\ &\leq C\int_{\Omega \cap B_{2\delta(0)}}|U_\e|^{2^*(s)} dx \\ &\leq \int_{B_{2\delta(0)}} (\ep^{\frac{N-2}{2(2-s)}}(\ep+|x|^{2-s})^{\frac{2-N}{2-s}})^{\frac{2(N-s)}{N-2}}dx\\
&= \ep^{\frac{s}{2-s}} \int_{B_{2\delta\ep^{-1/(2-s)}}(0)} (1+|y|^{2-s})^{-\frac{2(N-s)}{2-s}}dy = O(\ep^{\frac{s}{2-s}}).
\end{align*}

Let $t_\ep$ be a constant satisfying
\begin{align*}
J_\lambda(t_\ep\eta U_\ep) &= \sup_{t > 0}J_\lambda(t\eta U_\ep)\\ &= \sup_{t > 0} \left[ \frac12 K_0^\ep t^2 - \frac{1}{2^*(s)}K_1^\ep t^{2^*(s)} + \frac{\lambda}{2}K_3^\ep t^2 - \frac{1}{2^*(s)}K_2^\ep t^{2^*(s)}\right]\\
&\leq \sup_{t > 0} \left[ \frac12 K_0^\ep t^2 - \frac{1}{2^*(s)}K_1^\ep t^{2^*(s)} + \frac{\lambda}{2}K_3^\ep t^2\right].
\end{align*}
In case $N = 3$, we see that $K_3(\ep) = O(\ep^{\frac{1}{2-s}})$.
Hence,
\begin{align*}
J_\lambda(t_\ep\eta U_\ep) &\leq \sup_{t>0} [\frac12 K_0^\ep t^2 - \frac{1}{2^*(s)}K_1^\ep t^{2^*(s)}] + O(\ep^{\frac{1}{2-s}})\\ 
&= \frac{2-s}{2(N-s)} \left[\frac{K_0^\ep}{(K_1^\ep)^{\frac{N-2}{N-s}}}\right]^{\frac{N-s}{2-s}} + O(\ep^{\frac{1}{2-s}}).
\end{align*}
So to prove \eqref{threshold}, it suffices to show that
\begin{align}\label{extrme1}
K_0^\ep/(K_1^\ep)^{\frac{N-2}{N-s}} < 2^{-\frac{2-s}{N-s}}S_s + O(\ep^{\frac{1}{2-s}}) = \frac12 K_0/(\frac12 K_1)^{\frac{N-2}{N-s}} + O(\ep^{\frac{1}{2-s}}).
\end{align}
Taking \eqref{musnorm}, \eqref{k1ep} and \eqref{k2ep} into account, \eqref{extrme1} is equivalent to
\begin{align*}
\frac12K_0 - C\ep^{\frac{1}{2-s}}|\ln\ep| &< 2^{-\frac{2-s}{N-s}}S_s\left[\frac12 K_1 - O(\ep^{\frac{1}{2-s}})\right]^{\frac{N-2}{N-s}} + O(\ep^{\frac{1}{2-s}})\\ 
&= \frac12S_s K_1^{\frac{N-2}{N-s}} + O(\ep^{\frac{1}{2-s}})
\end{align*}
which is true for small $\ep > 0$, because $C>0$ and
\[K_0/K_1^{\frac{N-2}{N-s}} = S_s.\]

In case $N \geq 4$, we know that $K_3^\ep = O(\ep^{\frac{1}{2-s}})$.
Hence, 
\begin{align*}
J_\lambda(t_\ep\eta U_\ep) &\leq \sup_{t>0} [\frac12 K_0^\ep t^2 - \frac{1}{2^*(s)}K_1^\ep t^{2^*(s)}] + O(\ep^{\frac{1}{2-s}})\\ &= \frac{2-s}{2(N-s)} \left[\frac{K_0^\ep}{(K_1^\ep)^{\frac{N-2}{N-s}}}\right]^{\frac{N-s}{2-s}} + O(\ep^{\frac{1}{2-s}}).
\end{align*}
So to prove \eqref{threshold}, it suffices to show that
\begin{align}\label{extrme2}
K_0^\ep/(K_1^\ep)^{\frac{N-2}{N-s}} < 2^{-\frac{2-s}{N-s}}S_s + O(\ep^{\frac{1}{2-s}}) = \frac12 K_0/(\frac12 K_1)^{\frac{N-2}{N-s}} +  O(\ep^{\frac{1}{2-s}}).
\end{align}
Taking \eqref{musnorm}, \eqref{k1ep} and \eqref{k2ep} into account, \eqref{extrme2} is equivalent to
\begin{equation}\label{K1K2ineq}
\begin{aligned}
&\quad \left(\frac12K_0 - I(\ep)\right)\left(\frac12 K_1\right)^{\frac{N-2}{N-s}}\\
&< \frac12 K_0\left(\frac12 K_1 - I\!I(\ep) + O(\ep^{\frac{\wt s}{2-s}})\right)^\frac{N-2}{N-s} + O(\ep^{\frac{1}{2-s}})\\
&= \frac12 K_0 \left\{\left(\frac12 K_1\right)^{\frac{N-2}{N-s}}  - \frac{N-2}{N-s}\left(\frac12 K_1\right)^{\frac{s-2}{N-s}}I\!I(\ep)\right\} + O(\ep^\frac{1}{2-s}).
\end{aligned}
\end{equation}
Hence to verify \eqref{K1K2ineq}, we have to prove
\[\lim_{\ep \rightarrow 0} \frac{I\!I(\ep)}{I(\ep)} < \frac{(N-s)K_1}{(N-2)K_0}.\]
By \eqref{iep}, \eqref{iiep} and L'H\^opital's rule, we obtain
\begin{align*}
&\quad \lim_{\ep \rightarrow 0} \frac{I\!I(\ep)}{I(\ep)} = \lim_{\ep \rightarrow 0} \frac{I\!I'(\ep)}{I'(\ep)}\\
&= (N-2)^{-2} \int_{\mathbb{R}^{N-1}} \frac{g(y')}{|y'|^s(1+|y'|^{2-s})^{2(N-s)/(2-s)}}dy'\\
&\qquad\qquad\qquad \times \left(\int_{\mathbb{R}^{N-1}} \frac{|y'|^{2-2s}g(y')}{(1+|y'|^{2-s})^{2(N-s)/(2-s)}}dy'\right)^{-1}\\
&= (N-2)^{-2} \int_0^\infty \frac{r^{N-s}}{(1+r^{2-s})^{2(N-s)/(2-s)}}dr \times \left(\int_0^\infty \frac{r^{N+2-2s}}{(1+r^{2-s})^{2(N-s)/(2-s)}}dr\right)^{-1}
\end{align*}
Integration by parts gives for $2 \leq \beta \leq 2(N-s)-1$,
\[\int_0^\infty \frac{r^{\beta-2}}{(1+r^{2-s})^{\frac{2(N-s)}{2-s}-1}}dr = \frac{2N-2-s}{\beta-1}\int_0^\infty \frac{r^{\beta-s}}{(1+r^{2-s})^{\frac{2(N-s)}{2-s}}}dr.\]
Since
\[\int_0^\infty \frac{r^{\beta-s}}{(1+r^{2-s})^{\frac{2(N-s)}{2-s}}}dr = \int_0^\infty \frac{r^{\beta-2}}{(1+r^{2-s})^{\frac{2(N-s)}{2-s}-1}}dr-\int_0^\infty \frac{r^{\beta-2}}{(1+r^{2-s})^{\frac{2(N-s)}{2-s}}}dr,\]
we have
\begin{align}\label{min_id}
\int_0^\infty \frac{r^{\beta-s}}{(1+r^{2-s})^{\frac{2(N-s)}{2-s}}}dr = \frac{\beta-1}{2N-\beta-1-s}\int_0^\infty \frac{r^{\beta-2}}{(1+r^{2-s})^{\frac{2(N-s)}{2-s}}}dr.
\end{align}
So, plugging $\beta = N+2-s$ into \eqref{min_id}, we obtain
\[\lim_{\ep \rightarrow 0} \frac{I\!I(\ep)}{I(\ep)} = \frac{N-3}{(N+1-s)(N-2)^2}\]
and plugging $\beta = N+1-s$ into \eqref{min_id}
\begin{align*}
&\quad \frac{(N-s)}{(N-2)}\frac{K_1}{K_0}\\ &= \frac{N-s}{(N-2)^3}\left( \int_0^\infty \frac{r^{N-1-s}}{(1+r^{2-s})^{2(N-s)/(2-s)}}dr\right) \times \left(\int_0^\infty\frac{r^{N+1-2s}}{(1+r^{2-s})^{2(N-s)/(2-s)}}dr\right)^{-1}  \\&=(N-2)^{-2}.
\end{align*}
Therefore we obtain
\[\frac{I\!I(\ep)}{I(\ep)}< \frac{(N-s)}{(N-2)}\frac{K_1}{K_0}\]
for sufficiently small $\ep$ and complete the proof.


\section{Positivity of solution}
In this section, we establish the positivity of solutions. One first observes that 
\[0 = \langle J_\lambda'(u), u_-\rangle = \int_\Omega |\nabla u_-|^2 + \lambda|u_-|^2dx\]
where $u_- = \min(u,0)$. Since $\lambda > 0$, we have $u \geq 0$. Then the interior positivity of $u$ follows from the maximum principle
\begin{prop}
If $u \in C^1(\Omega \setminus \{x_1, x_2\})$ is a non-negative solution to \eqref{main in}, then 
\[u > 0 \text{ in }\Omega.\]
\end{prop}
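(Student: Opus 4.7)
The plan is to combine the non-negativity of the right-hand side of \eqref{main in} with the classical strong maximum principle. Rewriting the equation as
\[ (-\Delta + \lambda I) u \;=\; \frac{u^{2^*(s)-1}}{|x-x_1|^s} + \frac{u^{2^*(s)-1}}{|x-x_2|^s} \;\geq\; 0 \quad \text{in } \Omega \setminus \{x_1, x_2\}, \]
one sees that $u$ is a non-negative supersolution for the uniformly elliptic operator $L := -\Delta + \lambda I$ on any open subset of $\Omega \setminus \{x_1, x_2\}$, where the coefficients of $L$ are smooth and the potential $\lambda$ is non-negative (so the strong maximum principle in its standard form applies).

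Suppose, for contradiction, that $u(x_0) = 0$ for some $x_0 \in \Omega \setminus \{x_1, x_2\}$. Choose $r > 0$ so small that $\overline{B_r(x_0)} \subset \Omega \setminus \{x_1, x_2\}$. On $B_r(x_0)$ the weights $|x-x_i|^{-s}$ are bounded and smooth, so the right-hand side of \eqref{main in} is continuous, and standard elliptic regularity upgrades $u$ to a $C^2$ function on $B_r(x_0)$. Since $Lu \geq 0$, $u \geq 0$, and $u$ attains its infimum $0$ at the interior point $x_0$, the strong maximum principle for $L$ forces $u \equiv 0$ on $B_r(x_0)$.

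A standard propagation-of-zeros argument then extends the vanishing set: the interior of $\{u = 0\}$ is both open (by definition) and relatively closed in $\Omega \setminus \{x_1, x_2\}$ (by the previous step applied to any boundary point of $\{u=0\}$ in $\Omega \setminus \{x_1, x_2\}$). Since $\Omega \setminus \{x_1, x_2\}$ is connected for $N \geq 3$ (removing two points from a connected open set in $\mathbb{R}^N$ does not disconnect it), we deduce $u \equiv 0$ in $\Omega \setminus \{x_1, x_2\}$, hence $u \equiv 0$ a.e.\ in $\Omega$. This contradicts the non-triviality of the solution produced by the mountain pass construction in Sections 3 and 4, whose critical level $c^* \geq \alpha > 0$ rules out the trivial function.

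The routine steps are regularity on $B_r(x_0)$ and the maximum principle; the only conceptual point requiring care is the connectedness of $\Omega \setminus \{x_1, x_2\}$ (which is what allows the strict positivity to propagate across the domain in the presence of the two singular points). Non-triviality of $u$ is inherited from the variational setup and is therefore not an additional obstacle.
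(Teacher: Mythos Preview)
Your proof is correct, but it takes a different route from the paper's. The paper follows V\'azquez's barrier construction directly: assuming $u(y_0)=0$ with $u>0$ in a nearby ball $B$, it builds an explicit subsolution $\overline{u}(x)=v(R-|x-y_1|)$ on an annulus and obtains a contradiction with the fact that $y_0$ is a minimum via a Hopf-type comparison. You instead invoke the classical strong maximum principle for $L=-\Delta+\lambda I$ (valid since $\lambda\ge 0$) to get $u\equiv 0$ on a small ball, and then propagate the zero set using the connectedness of $\Omega\setminus\{x_1,x_2\}$ in dimension $N\ge 3$. Your approach is shorter and more standard, since for the linear Laplacian the strong maximum principle is available off the shelf; the paper's construction is in effect re-deriving that principle by hand (the V\'azquez technique is really designed for quasilinear operators where the classical theory does not apply). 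One small point: the proposition as stated omits the non-triviality hypothesis, and both arguments tacitly use it; you make this explicit by appealing to the mountain-pass level $c^*\ge\alpha>0$, which is the cleanest way to close the argument in context.
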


\begin{proof}
We employ the argument in \cite{vaz}. If $u$ vanishes somewhere in $\Omega \setminus \{x_1, x_2\}$, then there exists $y_0 \in \Omega \setminus \{x_1, x_2\}$ and a ball $B=B_R(y_1)$ satisfying $u(y_0) = 0$, $\overline{B} \subset \Omega \setminus \{x_1, x_2\}, y_0 \in \partial B$ and $0 < u < a$ in $B$.
We observe that $u > 0$ on
\[A = \{ x :  \frac R2 < |x-y_1| < R\}\]
and
\[c = \inf\{u(x) : |x-y_1| = \frac R2\}\]
satisfies $0 < c <a$.

For given $k_1, k_2 > 0$, let $v(r)$ be solution to
\begin{align*}
\left\{\begin{array}{l}
v'' = k_1v' + k_2v \text{ for } 0 < r < \frac R2,\\
v(0) = 0, v(\frac R2) = c.
\end{array}
\right.
\end{align*}
We note that $v'(0) > 0$.
Now we consider
\[\overline u (x) = v(R - |x - y_1|).\]
Then $\overline u(x) = 0 \leq u(x)$ on $\partial B_R(y_1)$ and $\overline u(x) = c \leq u(x)$ on $\partial B_{\frac R2}(y_1)$.
Moreover, on $A$, we have
\begin{align*}
-\Delta \overline u + \lambda \overline u &= -v''(R - |x-y_1|) - v'(R - |x-y_1|)(\frac{1-N}{|x-y_1|}) + \lambda v(R - |x-y_1|)\\
&\leq (\frac{N-1}{R} - k_1)v'(R - |x-y_1|) + (\lambda - k_2)v(R - |x - y_1|)\\
&\leq 0
\end{align*}
for sufficiently large $k_1, k_2$.

We claim that $u \geq \overline u$ on $A$. Suppose not, there exists $\Omega_1 \subset A$ such that $\overline u > u$ on $\Omega_1$. And we have
\[-\Delta(\overline u - u) + \lambda (\overline u - u) \leq 0 \text{ on } \Omega_1.\]
So, by multiplying $\overline u - u$ and integrating over $\Omega_1$, we obtain
\begin{align*}
0 < \int_{\Omega_1} |\nabla(\overline u - u)|^2 + \lambda |\overline u - u|^2 dx \leq 0.
\end{align*}
which is a contradiction.

Since $u(y_0) = \overline u(y_0) = 0$, $u \geq \overline u$ on $A$ and $v' > 0$, $u'(y_1)$ should be positive which contradicts to $y_0$ is minimum point.
\end{proof}


\section{Regularity of solution to \eqref{main in}}
In this section, we verify the regularity of solution. Recall the following lemma. 
\begin{lem}[Appendix B in \cite{str}]
Let $N \geq 3$. Suppose $u \in H^1_{loc}(\Omega)$ is a weak solution to
\[-\Delta u = g(\cdot, u) \text { in } \Omega\]
where $g(x,u)$ is measurable in $x \in \Omega$ and continuous in $u \in \mathbb{R}$. If $g$ satisfies
\[g(x,u) \leq C(1+|u|^p)\]
for some $p \leq \frac{N+2}{N-2}$, then $u \in C^{1, \alpha}(\Omega)$ for any $\alpha > 0$.
\end{lem}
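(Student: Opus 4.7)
I would prove this by a two-step bootstrap: first upgrade $u$ from $H^1_{loc}$ to $L^q_{loc}$ for every $q<\infty$ via the Brezis--Kato iteration, and then invoke linear Calder\'on--Zygmund theory together with Morrey's embedding to reach $C^{1,\alpha}$.

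For Step 1, I rewrite the equation locally as $-\Delta u = a(x)\,u + b(x)$, where
\[
|a(x)| \le C\bigl(1+|u(x)|^{p-1}\bigr), \qquad |b(x)| \le C.
\]
Since $p-1 \le 4/(N-2)$ and the Sobolev embedding gives $u \in L^{2N/(N-2)}_{loc}(\Omega)$, we have $a \in L^{N/2}_{loc}(\Omega)$. For a cutoff $\eta \in C_c^\infty(\Omega)$ and $L>0$, I would test the equation against $\varphi = \eta^2\, u\, \min(|u|,L)^{2(\beta-1)}$ and apply the Sobolev inequality together with H\"older. After passing to the limit $L \to \infty$ by Fatou, this promotes $u \in L^{2\beta}_{loc}$ to $u \in L^{2^*\beta}_{loc}$ with $2^* = 2N/(N-2)$. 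Iterating from $\beta_0 = 2^*/2$ yields $u \in L^q_{loc}(\Omega)$ for every finite $q$.

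For Step 2, once $u \in L^q_{loc}$ for all finite $q$, the growth hypothesis forces $g(\cdot,u) \in L^q_{loc}(\Omega)$ for every $q<\infty$. Interior $W^{2,q}$ Calder\'on--Zygmund theory for $-\Delta u = g(\cdot,u)$ then gives $u \in W^{2,q}_{loc}(\Omega)$, and taking $q>N$, Morrey's embedding $W^{2,q}_{loc} \hookrightarrow C^{1,1-N/q}_{loc}$ delivers $u \in C^{1,\alpha}_{loc}(\Omega)$ with $\alpha = 1-N/q$ arbitrarily close to $1$.

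The main obstacle lies in Step 1 at the critical growth $p = (N+2)/(N-2)$, where $a$ sits only in the borderline space $L^{N/2}_{loc}$: the test-function estimate produces a term of the form $\|a\|_{L^{N/2}(\mathrm{supp}\,\eta)} \bigl\|\eta\, u\, u_L^{\beta-1}\bigr\|_{L^{2^*}}^2$ on the right which, in general, is not absorbable into the Sobolev term on the left. The remedy is the splitting $a = a_1 + a_2$ with $\|a_1\|_{L^{N/2}(\mathrm{supp}\,\eta)} \le \tau$ made arbitrarily small by absolute continuity of the $L^{N/2}$ norm, and $a_2 \in L^\infty$; the first contribution is then absorbed to the left while the second is controlled by $\int \eta^2 u^{2\beta}$, so the iteration proceeds uniformly in $\beta$ and reaches every finite exponent, after which Step 2 is routine.
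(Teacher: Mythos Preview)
Your bootstrap argument via Brezis--Kato iteration followed by Calder\'on--Zygmund and Morrey is correct and is exactly the standard proof of this regularity lemma. Note, however, that the paper does not prove this statement at all: it is quoted verbatim as a known result from Appendix~B of Struwe's book and is simply applied to conclude $u\in C^{1,\alpha}(\Omega')$ for $\Omega'\subset\subset\Omega\setminus\{x_1,x_2\}$. What you have written is essentially a sketch of Struwe's own proof in that appendix, including the critical-exponent absorption trick via the splitting $a=a_1+a_2$, so there is nothing to compare---your argument and the cited reference agree.
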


We observe that for $\Omega' \subset\subset \Omega \setminus{\{x_1, x_2\}}$, 
\[ \frac{|u|^{2^*(s)-2}u}{|x-x_1|^s} + \frac{|u|^{2^*(s)-2}u}{|x-x_2|^s} \leq C|u|^{2^*(s)-1}\]
and $2^*(s)-1 \leq \frac{N+2}{N-2}$. Hence the solution $u$ to \eqref{main in} is in $C^{1,\alpha}(\Omega')$.

\section{Neumann Problem with the multiple singularities}
In this section, we deal with the existence theory for the equation
\begin{align}\label{main in2}
\left\{
\begin{array}{l}
-\Delta u + \lambda u = \sum_{i=1}^I\frac{|u|^{2^*(s_i)-2}u}{|x-x_{i}|^{s_i}}\text{ in }\Omega\\
\frac{\partial u}{\partial \nu} = 0 \text{ on }\partial\Omega
\end{array}\right.
\end{align}
where $0 < s_i < 2$, $\Omega$ is $C^2$ bounded domain with $x_i \in \partial \Omega$ for $1 \leq i \leq I'-1$ and $x_i \in \Omega$ for $I' \leq i \leq I$. In addition, we assume that $x_{i_1} \neq x_{i_2}$ if $i_1 \neq i_2$. The energy functional is given by
\[J_\lambda(u) =  \frac12\int_{\Omega}(|\nabla u|^2 + \lambda u^2)dx - \sum_{i=1}^I\frac{1}{2^*(s_i)}\int_{\Omega}\frac{u_+^{2^*(s_i)}}{|x-x_i|^{s_i}}dx.\]
We see that $J_\lambda$ is $C^1$ and
\[\langle J'_\lambda(u), \phi \rangle = \int_\Omega( \nabla u \nabla \phi + \lambda u \phi) dx - \sum_{i=1}^I \int_\Omega \frac{u_+^{2^*(s_i)-1}}{|x-x_i|^{s_i}}\phi dx\]
for $\phi \in H^1(\Omega)$.

In the same fashion as the proof of Proposition \ref{ps}, we obtain the following proposition :
\begin{prop}\label{ps2}
The functional $J_\lambda$ satisfies the $(PS)_c$ condition for
\begin{align*}
\begin{array}{ll}
c < \min\left(\min_{1 \leq i \leq I'-1}\frac{2-s_i}{4(N-s_i)}S_{s_i}^{\frac{2^*(s_i)}{2^*(s_i)-2}}, \min_{I' \leq i \leq I}\frac{2-{s_i}}{2(N-{s_i})}S_{s_i}^{\frac{2^*(s_i)}{2^*(s_i)-2}}\right).
\end{array}
\end{align*}
\end{prop}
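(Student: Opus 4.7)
The plan is to mimic the proof of Proposition \ref{ps} almost verbatim, tracking the fact that each singularity $x_i$ now carries its own exponent $s_i$. Since the stated threshold is the minimum of the individual thresholds $\frac{2-s_i}{2(N-s_i)}S_{s_i}^{2^*(s_i)/(2^*(s_i)-2)}$ (interior case) and $\frac{2-s_i}{4(N-s_i)}S_{s_i}^{2^*(s_i)/(2^*(s_i)-2)}$ (boundary case), it suffices to rule out concentration at each $x_i$ individually.

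First, I would take a $(PS)_c$ sequence $\{u_m\}$ and prove $H^1$-boundedness by subtracting $\frac{1}{2}\langle J'_\lambda(u_m),u_m\rangle$ from $J_\lambda(u_m)$, which leaves the positive combination $\sum_{i=1}^{I}\bigl(\tfrac12-\tfrac{1}{2^*(s_i)}\bigr)\int_\Omega \frac{(u_m)_+^{2^*(s_i)}}{|x-x_i|^{s_i}}\,dx$; since every $2^*(s_i)>2$, the coercivity argument runs unchanged. After extracting a weakly convergent subsequence $u_m\rightharpoonup u$, I would apply Lions' concentration-compactness principle to the $I+2$ measures $|\nabla u_m|^2 dx$, $|u_m|^{2N/(N-2)}dx$ and $\frac{|u_m|^{2^*(s_i)}}{|x-x_i|^{s_i}} dx$ for $i=1,\dots,I$. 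For each $i$, the weight $|x-x_i|^{-s_i}$ is bounded on compact subsets of $\overline{\Omega}\setminus\{x_i\}$ and $2^*(s_i)<\frac{2N}{N-2}$, so the cut-off argument of Proposition \ref{ps} forces the $i$-th weighted measure to concentrate only at $x_i$, yielding the decomposition $\frac{|u_m|^{2^*(s_i)}}{|x-x_i|^{s_i}} dx\rightharpoonup \frac{|u|^{2^*(s_i)}}{|x-x_i|^{s_i}} dx + \nu_i\delta_{x_i}$.

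Next, by testing a standard cut-off of $v_m=u_m-u$ near each concentration point and invoking either the Hardy-Sobolev inequality on $\mathbb{R}^N$ or Lemma \ref{HS} in the boundary case, one obtains
\[S_{s_i}\nu_i^{2/2^*(s_i)} \leq \mu_i \ (x_i\in\Omega), \qquad 2^{(2-2^*(s_i))/2^*(s_i)}S_{s_i}\nu_i^{2/2^*(s_i)} \leq \mu_i \ (x_i\in\partial\Omega).\]
Any additional concentration point not in $\{x_1,\dots,x_I\}$ carries no singular right-hand side, so testing $J'_\lambda(u_m)$ against $u_m\phi^l(\cdot-x^*)$ at such a point forces the corresponding $\mu=0$, exactly as in the proof of Proposition \ref{ps}. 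Assuming for contradiction that $\mu_i>0$ for some $i$, the same test applied at $x_i$ yields $\mu_i\leq\nu_i$; combining this with the above inequality gives either $\nu_i\geq S_{s_i}^{2^*(s_i)/(2^*(s_i)-2)}$ or $\nu_i\geq \tfrac12 S_{s_i}^{2^*(s_i)/(2^*(s_i)-2)}$ depending on the location of $x_i$. The identity
\[c = \lim_{m\to\infty}\Bigl(J_\lambda(u_m)-\tfrac12\langle J'_\lambda(u_m),u_m\rangle\Bigr) \geq \Bigl(\tfrac12-\tfrac{1}{2^*(s_i)}\Bigr)\nu_i = \frac{2-s_i}{2(N-s_i)}\nu_i\]
then contradicts the hypothesis on $c$. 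The main point to verify carefully is that the $\nu_i$'s at distinct singularities do not interact (which is automatic since $x_{i_1}\neq x_{i_2}$), and that the energy bookkeeping across mixed exponents $s_i$ still delivers a uniformly positive remainder after subtraction; both are straightforward but deserve explicit attention.
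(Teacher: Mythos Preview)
Your proposal is correct and follows exactly the approach the paper intends: the paper does not give a separate proof of Proposition~\ref{ps2} but simply states that it is obtained ``in the same fashion as the proof of Proposition~\ref{ps}''. Your sketch reproduces that proof step by step, with the only necessary adaptations being the per-singularity exponents $s_i$ in the concentration-compactness bookkeeping and the observation that the remainder $\sum_i\bigl(\tfrac12-\tfrac{1}{2^*(s_i)}\bigr)\int_\Omega\frac{(u_m)_+^{2^*(s_i)}}{|x-x_i|^{s_i}}dx$ is term-by-term nonnegative, both of which you identify correctly.
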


Using Proposition \ref{ps2}, one can obtain the following theorems by the same method as we prove for Theorem \ref{thm1}.
\begin{thm}[Existence of solution to \eqref{main in2} for small $\lambda$]\label{thm3}
There exists $\Lambda > 0$ such that \eqref{main in2} admit a positive solution for $\lambda$ with $0 < \lambda < \Lambda$.
\end{thm}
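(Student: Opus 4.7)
The plan is to follow the mountain-pass strategy used in Section 3 for Theorem \ref{thm1}, replacing the two singular terms by the full sum $\sum_{i=1}^I$. The Palais--Smale threshold is given by Proposition \ref{ps2}, and the mountain-pass geometry of $J_\lambda$ is identical to the two-singularity case: applying the Hardy--Sobolev inequality to each of the $I$ singular integrals termwise yields $\alpha, \rho > 0$ with $J_\lambda(u) \ge \alpha$ whenever $\|u\|_{H^1(\Omega)} = \rho$, and $J_\lambda(t u_0) \to -\infty$ as $t \to \infty$ for any fixed nontrivial nonnegative $u_0$, which holds because $2^*(s_i) > 2$ for every $i$.

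The crux is to show that the mountain-pass level lies strictly below the threshold in Proposition \ref{ps2} once $\lambda$ is small enough. For this I would follow Section 3 and plug a constant function $c \ge 0$ into the energy, obtaining
\begin{align*}
J_\lambda(c) = \tfrac{1}{2}|\Omega|\lambda c^2 - \sum_{i=1}^I \frac{C_i}{2^*(s_i)}\,c^{2^*(s_i)}, \qquad C_i := \int_\Omega \frac{dx}{|x-x_i|^{s_i}} < \infty.
\end{align*}
Since every $2^*(s_i) > 2$, the function $c \mapsto J_\lambda(c)$ attains its maximum over $c \ge 0$ at a unique positive value $c^*_\lambda$, characterized by
\begin{align*}
\lambda|\Omega| = \sum_{i=1}^I C_i\,(c^*_\lambda)^{2^*(s_i)-2}.
\end{align*}
All the exponents $2^*(s_i)-2$ are strictly positive, so this relation forces $c^*_\lambda \to 0$ as $\lambda \to 0^+$, and consequently
\begin{align*}
0 < \max_{c \ge 0} J_\lambda(c) \le \tfrac{1}{2}|\Omega|\lambda\,(c^*_\lambda)^2 \longrightarrow 0 \qquad \text{as } \lambda \to 0^+.
\end{align*}

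Choosing $T$ large enough that $J_\lambda(T) < 0$ and taking the linear path $\gamma(t) = tT$, one has $\sup_{t \in [0,1]} J_\lambda(\gamma(t)) \le \max_{c \ge 0} J_\lambda(c)$, so the mountain-pass value for $J_\lambda$ is dominated by $\max_c J_\lambda(c)$. By the previous display, this is strictly less than the Palais--Smale threshold in Proposition \ref{ps2} whenever $0 < \lambda < \Lambda$ for some $\Lambda > 0$. The Mountain Pass Lemma together with Proposition \ref{ps2} then produces a nontrivial critical point of $J_\lambda$ at a strictly positive level, and its strict positivity follows from the argument of Section 5 applied termwise to each singular contribution. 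I expect the only non-routine obstacle to be the verification of Proposition \ref{ps2} itself for general $I$, where the concentration-compactness analysis must handle arbitrarily many singular contributions with differing Hardy--Sobolev exponents simultaneously; since the paper states Proposition \ref{ps2} up front, the remainder is a direct adaptation of Section 3, and the absence of a closed-form maximizer of $J_\lambda$ on constants is compensated by the elementary monotonicity argument above.
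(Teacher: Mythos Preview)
Your proposal is correct and follows the same route as the paper: the paper's own proof of Theorem~\ref{thm3} is simply the sentence ``Using Proposition~\ref{ps2}, one can obtain the following theorems by the same method as we prove for Theorem~\ref{thm1},'' i.e.\ test with constants and use the mountain-pass lemma below the Palais--Smale threshold. Your only addition is the elementary first-order-condition argument showing $\max_{c\ge 0}J_\lambda(c)\to 0$ when the exponents $2^*(s_i)$ differ, which is exactly the adaptation needed since the closed-form maximizer of Section~3 is no longer available; this is a detail the paper omits, and your treatment of it is correct.
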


Moreover, under the geometric setting of $x_1 \in \partial \Omega$ and the mean curvature $H(x_1)$ is positive, one can prove the existence of  a positive solution to $\eqref{main in2}$ when \[\frac{2-s_1}{4(N-s_1)}S_{s_1}^{\frac{2^*(s_1)}{2^*(s_1)-2}} = \min\left(\min_{1 \leq i \leq I'-1}\frac{2-s_i}{4(N-s_i)}S_{s_i}^{\frac{2^*(s_i)}{2^*(s_i)-2}}, \min_{I' \leq i \leq I}\frac{2-{s_i}}{2(N-{s_i})}S_{s_i}^{\frac{2^*(s_i)}{2^*(s_i)-2}}\right).\] Actually we may assume $x_1 = (0, \cdots, 0) \in \partial \Omega$ and the mean curvature $H(0)$ is positive.
Then, up to rotation, the boundary near the origin can be represented by 
\[x_n = h(x') = \frac12 \sum_{i=1}^{N-1}\al_i x_i^2 + o(|x'|^2)\]
where $x' = (x_1, x_2, \cdots, x_{N-1}) \in D_\delta(0) = B_\delta(0) \cap \{x_N=0\}$ for some $\delta > 0$. Here $\al_1, \al_2, \cdots, \al_{N-1}$ are the principal curvature of $\partial \Omega$ at $0$ and the mean curvature $\sum_{i=1}^{N-1} \al_i > 0$. Denote
\[g(x') = \frac12\sum_{i=1}^{N-1}\al_ix_i^2.\] 

Consider
\begin{equation*}
U_\ep(x):=\ep^{\frac{N-2}{2(2-s)}}(\ep+|x|^{2-s})^{\frac{2-N}{2-s}}
\end{equation*}
for small parameter $\e>0$.
Then,
it follows that
\begin{equation*}
\int_{\re^N} |\nabla U_1|^2dx / (\int_{\re^N} \frac{|U_1|^{2^*(s)}}{|x|^s}dx)^{\frac{N-2}{N-s}} = S_s.
\end{equation*}
Choose $\delta$ such that $x_2, \cdots, x_I \notin B_{3\delta}(0)$. Set a cut-off function $\eta$ such that
\begin{equation*}
\quad \eta \in C_c^\infty(\re^N), \quad 0 \leq \eta \leq 1, \quad \eta =1 \ {\rm in} \ B_{\delta}(0), \quad\eta = 0 \ {\rm in}\ \re^N\setminus{B_{2\delta}(0)}.
\end{equation*}
Note that from $2^*(s) >2$, $J_\lambda(T\eta U_\ep) < 0$.
We define
\begin{align*}
\mathbb{P} =
\left\{p(t) \Big|
\begin{array}{ll}
 p(t) : [0,1] \rightarrow H^1(\Omega)\text{ is continous map with }\\
p(0) = 0 \in H^1(\Omega)\text{ and }p(1) = T\eta U_\ep(x)|_\Omega
\end{array}
\right\}.
\end{align*}
Let 
\[c^* = \inf_{p(t) \in \mathbb{P}}\sup_{0 \leq t \leq 1} \{ J_\lambda(p(t)) \}.\]
Then, thanks to Proposition \ref{ps2}, it suffices to show 
\begin{align*}
c^* < \frac{2-s_1}{4(N-s_1)} S_s^{\frac{2^*(s_1)}{2^*(s_1)-2}}.
\end{align*}
Denote
\begin{align*}
&\wt K_0^\ep := \int_{\Omega} |\nabla(\eta U_\ep)|^2 dx, \\
&\wt K_i^\ep := \int_\Omega \frac{(\eta U_\ep)^{2^*(s_i)}}{|x - x_i|^{s_i}}dx \text{ for } 1\leq i \leq I,\\
&\wt K_{I+1}^\ep := \int_\Omega (\eta U_\ep)^2dx.
\end{align*}
Then by repeating arguments in proof of Theorem \ref{thm2}, we get estimates for each $\wt K^\ep_i$ as follows :
\begin{itemize}
\item $\wt K_0^\ep$\begin{align*}
\wt K_0^\ep = 
\left\{\begin{array}{ll}
\frac12 \wt K_0 - C\ep^{\frac{1}{2-s}}|\ln\ep| + O(\ep^{\frac{1}{2-s}}) &\text{ when } N = 3,\\
\frac12 \wt K_0 - \wt I(\ep) + O(\ep^{\frac{1}{2-s}}) &\text{ when } N \geq 4,
\end{array}
\right.
\end{align*}
where
\begin{align*}
\wt I(\ep)&:= \int_{\mathbb{R}^{N-1}} \int_0^{g(x')} |\nabla U_\ep|^2 dx_Ndx',\\
\wt K_0 &:= \int_{\mathbb{R}^N} |\nabla U_\ep|^2dx.
\end{align*}
\item $\wt K_1^\ep$ \begin{align*}
\wt K_1^\ep = \frac12 \wt K_1 - \wt I\!I(\ep) + O(\ep^{\frac{1}{2-s_1}})
\end{align*}
where
\begin{align*}
\wt I\!I(\ep) &:= \int_{\mathbb{R}^{N-1}} \int_0^{g(x')} \frac{|U_\ep|^{2^*(s_1)}}{|x|^{s_1}}dx_Ndx',\\
\wt K_1 &= \int_{\mathbb{R}^N} \frac{|U_\ep|^{2^*(s_1)}}{|x|^{s_1}}dx.
\end{align*}
\item $\wt K^\ep_i$ for $2 \leq i \leq I$ \begin{align*}
\wt K^\ep_i= O(\ep^{\frac{s_i}{2-s_i}}) 
\end{align*}
\item $\wt K_{I+1}^\ep$ \begin{align*}
\wt K_{I+1}^\ep =
\left\{\begin{array}{ll}
O(\ep^{\frac{1}{2-s_1}}), & N=3,\\
O(|\ep^{\frac{2}{2-s_1}}\ln \ep|), & N=4,\\
O(\ep^{\frac{2}{2-s_1}}), & N \geq 5.
\end{array}
\right.
\end{align*}
\end{itemize}
Therefore using the fact 
\[\frac{\wt I\!I(\ep)}{\wt I(\ep)}< \frac{(N-s_1)}{(N-2)}\frac{\wt K_1}{\wt K_0}\]
which is verified in the proof of Theorem \ref{thm2}, we obtain the following theorem.

\begin{thm}[Existence of solution to \eqref{main in2} with the boundary singularity]\label{thm4}
Suppose $x_1 \in \partial \Omega$ and the mean curvature $H(x_1)$ is positive. If \[\frac{2-s_1}{4(N-s_1)}S_{s_1}^{\frac{2^*(s_1)}{2^*(s_1)-2}} = \min\left(\min_{1 \leq i \leq I'-1}\frac{2-s_i}{4(N-s_i)}S_{s_i}^{\frac{2^*(s_i)}{2^*(s_i)-2}}, \min_{I' \leq i \leq I}\frac{2-{s_i}}{2(N-{s_i})}S_{s_i}^{\frac{2^*(s_i)}{2^*(s_i)-2}}\right),\] then there exists a positive solution to $\eqref{main in2}$.
\end{thm}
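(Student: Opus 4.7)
The plan is to run the mountain pass lemma in tandem with Proposition \ref{ps2}. Under the hypothesis of Theorem \ref{thm4}, the threshold of the Palais--Smale condition equals $\frac{2-s_1}{4(N-s_1)}S_{s_1}^{2^*(s_1)/(2^*(s_1)-2)}$, so the problem reduces to constructing a mountain pass path along which the supremum of $J_\lambda$ is strictly below this value.

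First I would set up the local geometry exactly as in Section 4. Place $x_1 = 0 \in \partial\Omega$ with the boundary flattened as $x_N = h(x') = \frac12\sum \alpha_i x_i^2 + o(|x'|^2)$, $\sum \alpha_i > 0$, and pick $\delta > 0$ so small that $B_{3\delta}(0)$ excludes $x_2,\dots,x_I$. Take the extremal $U_\epsilon$ of $S_{s_1}$ on $\mathbb{R}^N$, multiplied by a cutoff $\eta$ supported in $B_{2\delta}(0)$. Since $2^*(s_i) > 2$ for every $i$, choosing $T$ large guarantees $J_\lambda(T\eta U_\epsilon) < 0$, so the path class $\mathbb{P}$ and minimax level $c^*$ are defined verbatim as in Section 4. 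The Sobolev embedding gives $J_\lambda \geq \alpha > 0$ on $\{\|u\|_{H^1}=\rho\}$ for some $\alpha,\rho > 0$ by the same argument that works for \eqref{main in}, because each Hardy--Sobolev term obeys $\int_\Omega \tfrac{u_+^{2^*(s_i)}}{|x-x_i|^{s_i}} \leq C\|u\|_{H^1}^{2^*(s_i)}$.

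The key observation is that, along the ray $t\eta U_\epsilon$, the auxiliary terms $-\frac{t^{2^*(s_i)}}{2^*(s_i)}\wt K_i^\epsilon$ with $i \geq 2$ are nonpositive, so they may be discarded in an upper bound:
\[
c^* \leq \sup_{t > 0} J_\lambda(t\eta U_\epsilon) \leq \sup_{t > 0} \left[\frac{t^2}{2}\wt K_0^\epsilon - \frac{t^{2^*(s_1)}}{2^*(s_1)}\wt K_1^\epsilon + \frac{\lambda t^2}{2}\wt K_{I+1}^\epsilon\right].
\]
This supremum is formally identical to the one analysed in Section 4, with $(K_0^\epsilon, K_1^\epsilon, K_3^\epsilon, s)$ replaced by $(\wt K_0^\epsilon, \wt K_1^\epsilon, \wt K_{I+1}^\epsilon, s_1)$. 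Substituting the expansions recalled in the itemized list, the inequality $c^* < \frac{2-s_1}{4(N-s_1)}S_{s_1}^{2^*(s_1)/(2^*(s_1)-2)}$ reduces, after the same algebra as in \eqref{extrme2}--\eqref{K1K2ineq}, to
\[
\lim_{\epsilon \to 0}\frac{\wt I\!I(\epsilon)}{\wt I(\epsilon)} < \frac{(N-s_1)\wt K_1}{(N-2)\wt K_0},
\]
which is already certified by the integration-by-parts identity \eqref{min_id}. Proposition \ref{ps2} then delivers a critical point $u$ of $J_\lambda$ at level $c^*$, and the arguments of Sections 5--6 give $u > 0$ and the desired regularity.

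The only place where one could stumble is in keeping track of the error orders: the terms $\wt K_i^\epsilon = O(\epsilon^{s_i/(2-s_i)})$ for $i \geq 2$ have exponents that need not exceed $\frac{1}{2-s_1}$ in general. This is exactly why discarding them in the upper bound (which is legitimate because they carry a minus sign) is crucial--otherwise one would need a case analysis on the relative sizes of the $s_i$'s. With that sign observation in hand, the proof is a clean transcription of the Theorem \ref{thm2} argument, with $s_1$ playing the role of the distinguished boundary exponent.
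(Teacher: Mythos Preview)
Your proposal is correct and follows the paper's own approach essentially line for line: localize at $x_1$, use the cut-off extremal $\eta U_\epsilon$ for $S_{s_1}$, drop the nonpositive terms $-\frac{t^{2^*(s_i)}}{2^*(s_i)}\wt K_i^\epsilon$ for $i\ge 2$, and reduce to the Section~4 computation culminating in the ratio inequality $\wt I\!I(\epsilon)/\wt I(\epsilon) < (N-s_1)\wt K_1/((N-2)\wt K_0)$. Your remark that the sign-based discarding of $\wt K_i^\epsilon$, $i\ge 2$, is what makes the argument independent of the relative sizes of the $s_i$ is spot on and in fact more explicit than the paper's Section~7 treatment.
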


\section*{Acknowledgments} 
The first author is supported by Grant-in-Aid for JSPS Research Fellow (JSPS KAKENHI Grant Number JP16J08945).


\end{document}